\documentclass[12pt,reqno]{amsart}
\usepackage{amsmath,amsfonts,amsthm,graphicx}
\usepackage{a4}
\usepackage[mathscr]{euscript}

\title{Robust error bounds for finite element approximation of reaction-diffusion problems with non-constant reaction coefficient in arbitrary space dimension}

\author{Mark Ainsworth and Tom\'a\v s Vejchodsk\'y}

\address{
  Mark Ainsworth,
  Division of Applied Mathematics,
  Brown University,
  182 George Street
  Providence, RI 02912, USA
}\email{mark\_ainsworth@brown.edu}
\address{
  Tom{\'a}{\v s} Vejchodsk{\'y},
  Mathematical Institute,
  University of Oxford,
  Andrew Wiles Building,
  Radcliffe Observatory Quarter,
  Woodstock Road,
  Oxford, OX2 6GG, UK,
  and
  Institute of Mathematics,
  Academy of Sciences,
  {\v Z}itn{\'a} 25, CZ-115 67 Prague 1,
  Czech Republic.
}\email{vejchod@math.cas.cz}
    \thanks{Date: \today.\\
    M.A. acknowledges the support from AFOSR contract FA 9550-12-1-0399.
    %Partial support for M.A. from
    %AFOSR contract FA 9550-12-1-0399
    %and for T.V.
  T.V. acknowledges the support from RVO~67985840.
  Further, his research was supported by Marie Curie Intra European Fellowship
  within the 7th European Community Framework Programme, project no.~328008.
    }
    \keywords{Finite element analysis. Robust a posteriori error estimate.
Singularly perturbed problems. Boundary layers.
Mixed boundary conditions.
}
    \subjclass{Primary 65N15. Secondary 65N30, 65J15.}

\begin{document}
\maketitle
\markright{ROBUST ERROR BOUNDS IN ARBITRARY SPACE DIMENSION}

\begin{abstract}
We present a fully computable a posteriori error estimator for piecewise linear
finite element approximations
of reaction-diffusion problems with mixed boundary conditions and piecewise
constant reaction coefficient formulated in arbitrary dimension.
The estimator provides a guaranteed upper bound on the energy norm of the error
and it is robust for all values of the reaction coefficient,
including the singularly perturbed case.
The approach is based on robustly equilibrated boundary flux functions
% Lind et al. (J. Comp. Phys. 231 (2012) 1499--1523)
%\cite{AinOde:2000}
of Ainsworth and Oden (Wiley 2000)
and on subsequent robust and explicit flux reconstruction.
This paper simplifies and extends the applicability of the previous result
%\cite{robustaee:2010}
of Ainsworth and Vejchodsk{\'y} (Numer. Math. 119 (2011) 219--243)
in three aspects: (i) arbitrary dimension, (ii) mixed boundary conditions,
and (iii) non-constant reaction coefficient.
It is the first robust upper bound on the error with these properties.
An auxiliary result that is of independent interest is the derivation of new
explicit constants for two types of trace inequalities on simplices.
\end{abstract}

\renewcommand{\b}[1]{\boldsymbol{#1}}
\newcommand{\bc}{\b{c}}
\newcommand{\bfx}{\b{x}}
\newcommand{\bn}{\b{n}}
\newcommand{\bnabla}{\b{\nabla}}
\newcommand{\bphi}{\b{\varphi}}
\newcommand{\bt}{\b{t}}
\newcommand{\btau}{\b{\tau}}
\newcommand{\btauKL}{\btau_K^{\mathrm{L}}}
\newcommand{\btauKO}{\btau_K^{\mathrm{O}}}
\newcommand{\btauKQ}{\btau_K^{\mathrm{Q}}}
\newcommand{\bx}{\mbox{\boldmath $x$}}
\newcommand{\cB}{\mathcal{B}}
\newcommand{\cC}{\mathscr{C}} %{\mathcal{C}}
\newcommand{\cF}{\mathcal{F}}
\newcommand{\cG}{\mathcal{G}}
\newcommand{\cN}{\mathcal{N}}
\newcommand{\cT}{\mathcal{T}}
\newcommand{\CT}{C_\mathrm{T}}
\newcommand{\CTkappa}{C_\mathrm{T}^\kappa}
\newcommand{\CTloc}{C_\mathrm{T}^\mathrm{loc}}
\newcommand{\ddiv}{\operatorname{div}}
\newcommand{\ds}{\dx[\b{s}]}
\newcommand{\dx}[1][\bfx]{\,\mathrm{d}#1}
\newcommand{\GammaD}{{\Gamma_{\mathrm{D}}}}
\newcommand{\GammaN}{{\Gamma_{\mathrm{N}}}}
\newcommand{\gD}{g_{\mathrm{D}}}
\newcommand{\gN}{g_{\mathrm{N}}}
\newcommand{\Hdiv}[1][\Omega]{\b{H}(\ddiv,#1)}
\newcommand{\kappamax}[1][\widetilde K]{\kappa_{#1,\mathrm{max}}}
\newcommand{\kappamin}[1][\widetilde K]{\kappa_{#1,\mathrm{min}}}
\newcommand{\norm}[1]{\left\|#1\right\|}
\newcommand{\oCT}{\overline{C}_\mathrm{T}}
\newcommand{\oGammaD}{\overline\Gamma_{\mathrm{D}}}
\newcommand{\oGammaN}{\overline\Gamma_{\mathrm{N}}}
\newcommand{\osc}{\operatorname{osc}}
\newcommand{\R}{\mathbb{R}}
\newcommand{\rhomax}{\tilde\rho_K}
\newcommand{\trinorm}[1]{|\!|\!|#1|\!|\!|}
\newcommand{\ttg}[1]{\textrm{(\ref{#1})}}

\newtheorem{theorem}{Theorem}%[section]
\newtheorem{lemma}[theorem]{Lemma}%[section]

\section{Introduction}

Consider a linear reaction-diffusion problem in a domain
$\Omega \subset \R^d$ with
mixed boundary conditions:
\begin{equation}
\label{eq:modpro}
  -\Delta u + \kappa^2 u = f \quad\text{in }\Omega; \qquad
  u = 0 \quad\text{on }\GammaD; \qquad
  \partial u/\partial \bn = \gN \quad\text{on }\GammaN,
\end{equation}
where $\bn$ stands for the unit outward normal vector to the boundary
$\partial\Omega$.
The dimension $d \geq 2$ is chosen arbitrarily.
For simplicity we assume $\Omega$ to be a polytope.
The portions $\GammaD$ and $\GammaN$ of the boundary $\partial\Omega$
are open, disjoint, and satisfy $\oGammaD \cup \oGammaN = \partial\Omega$.
The reaction coefficient $\kappa \geq 0$  is considered to be piecewise constant.
In order to guarantee unique solvability of \ttg{eq:modpro}, we consider
$\kappa > 0$ in a subdomain of $\Omega$ of a positive measure
or a positive measure of $\GammaD$.
We use the finite element method  to approximate the exact solution $u$ by a piecewise affine function $u_h$ with respect to a simplicial partition $\cT_h$
of $\Omega$.

In this paper we derive a computable a posteriori error estimate based
on robust flux equilibration and explicit flux reconstruction.
This error estimate $\eta$
provides a guaranteed and fully computable
upper bound on the energy norm of the error $\trinorm{u-u_h}$
and it is robust with respect to both $\kappa$ and the mesh-size $h$.

% The a posteriori error estimation for elliptic boundary value problems
% has already long history. Modern research in the context of the finite
% element method started by pioneering work of Babu\v{s}ka and Rheinboldt
% \cite{BabRhe:1978b}. Since then many techniques and concepts were developed.
% Let us mention
% explicit and implicit residual methods,
% hierarchical estimates,
% estimates based on postprocessing,
% goal oriented approaches,
% and complementarity techniques.
% See books \cite{AinOde:2000,BabStr:2001,BanRan:2003,Rep:2008,Verfurth:1996}
% for more information.
% Naturally, these approaches yield error estimates with different properties.

A posteriori error estimates are useful for adaptive algorithms,
where they play two roles. Firstly, they
indicate where the computational mesh should be refined or coarsened.
Secondly, they provide quantitative information about
the size of the error for reliable stopping criterion.
Unfortunately, many existing estimators do not provide actual numerical bounds
that can be used as a stopping criterion.

Adaptive algorithms are convergent \cite{Dorfler:1996}
provided the error estimates are
\emph{locally efficient} and \emph{reliable}.
If $\eta_K$ stand for local error indicators on elements $K \in \cT_h$ and
$\eta^2 = \sum_{K\in\cT_h} \eta_K^2$ is the global error estimator,
then
the indicators $\eta_K$ are said to be \emph{locally efficient} if
there exists a constant $c>0$ such that
$$
  c\eta_K \leq \trinorm{u-u_h}_{\widetilde K},
$$
where $\trinorm{u-u_h}_{\widetilde K}$ stands for the energy norm restricted
to a patch $\widetilde K$ of elements consisting of $K$ and neighbouring
elements sharing at least one vertex with $K$.
Similarly, the error estimator $\eta$ is \emph{reliable} if there exists
a constant $C>0$ such that
$$
  \trinorm{u-u_h} \leq C \eta.
$$
The error estimate $\eta$ is \emph{robust} if the constants $c$ and $C$ are
independent of $\kappa$ and mesh-size $h$.
The error estimate $\eta$ is a \emph{guaranteed upper bound} if
$\trinorm{u-u_h} \leq \eta$, i.e. the reliability constant $C$ is equal to one.
Finally, the error bound $\eta$ is \emph{fully computable} if it can be evaluated
in terms of the approximation $u_h$ and given data
without the need for generic (unknown) constants.

A robust, reliable, locally efficient explicit a posteriori error
estimate for problem \ttg{eq:modpro} was first derived
by Verf\"urth in \cite{Verfurth:1998a,Verfurth:1998}. This estimate, however,
does not provide guaranteed upper bound on the error.
An estimator which does provide an upper bound along with robust local efficiency
was obtained by Ainsworth and Babu\v{s}ka in
\cite{AinBab:1999}, but this upper bound depends on an exact solution of
a Neumann problem and as noted in \cite{AinBab:1999} is not fully computable.
Subsequently in \cite{robustaee:2010} we were able to develop fully computable
error bounds in the two dimensional setting 
by a complementarity technique combined with robustly equilibrated fluxes
and explicit flux reconstruction.
Here, we develop a simpler flux reconstruction
that is suitable for any dimension $d\geq 2$
and is applicable to the case of piecewise constant
coefficient $\kappa$ including the situation where $\kappa$ can be very large
in some parts of the domain and vanishingly small in others.
Furthermore, we extend the previous results by considering nonhomogeneous Neumann boundary conditions.
In order to achieve these goals, we develop some 
new techniques and tools for the analysis
that are of wider applicability than the problem addressed here.

The question of robust a posteriori error estimates for singularly perturbed
problems is studied by other authors as well.
In \cite{Grosman:2006}, an error estimate that is robust with respect to anisotropic
meshes is obtained, but unfortunately does not provide guaranteed upper
bound on the error. A robust, locally efficient and fully computable guaranteed
upper bound was obtained in \cite{CheFucPriVoh:2009}
for the finite volume method and $d=2$ and $3$.
Recently, a robust estimator for the error in the maximum norm was obtained
in \cite{Linss:2012} for the case $d=1$.

The basic idea behind our work can be traced back
to the method of the hypercircle \cite{Synge:1957} and later to
\cite{AubBur:1971,HasHla:1976,Veubeke:1965}.
This approach has been adopted by Repin \cite{Rep:2008} and his group
for a wide class of problems in conjunction with the solution of a global
minimization problem to compute the error bound.
We avoid any global computations and instead develop 
local algorithms for guaranteed and fully computable error bounds
based on \emph{flux equilibration}
\cite{AinBab:1999,BraSch:2008,CaiZha:2010,JirStrVoh:2010,Kelly:1984,LadLeg:1983,ParesSantosDiez:2009,Vohralik:2011}
etc.
In the present work we will utilize the robust flux equilibration
from \cite{AinBab:1999}.

The rest of the paper is organized as follows.
Section~\ref{se:modpro} defines the finite element approximation and corresponding
assumptions. The core of the paper lies in Section~\ref{se:AEE},
where we present new trace inequalities on simplices,
and develop two new flux reconstructions both of which are used to
derive the a posteriori error and our main result.
Finally, Section~\ref{se:numex} provides an
illustrative numerical example and Section~\ref{se:conclusions}
draws the conclusions.

\section{Model Problem and Its Approximate Solution}
\label{se:modpro}

The weak formulation of \ttg{eq:modpro} reads:
find $u\in V = \{ v \in H^1(\Omega) : v = 0 \text{ on }
\GammaD \}$ such that
\begin{equation}
\label{eq:weakf}
  \cB(u, v) = \cF(v) \quad \forall v \in V,
\end{equation}
where $\cB$ and $\cF$ are bilinear and linear forms, respectively,
defined on $V$ by
\begin{equation*}
%\label{eq:defBF}
  \cB(u,v) = \int_\Omega (\bnabla u \cdot \bnabla v + \kappa^2 u v ) \dx; \quad
  \cF(v) = \int_\Omega f v \dx + \int_\GammaN \gN v \ds.
\end{equation*}

In order to discretize problem \ttg{eq:modpro}
we consider a family of partitions $\cG = \{\cT_h\}$
of the domain $\Omega$. Each partition $\cT_h$ consists of simplices (elements),
their union is $\overline\Omega$, their interiors are
pairwise disjoint, and every facet of each simplex lies either
in $\partial\Omega$ or it is completely shared by exactly two neighbouring
simplices.
We assume that all partitions in $\cG$ are compatible the coefficient $\kappa$
meaning that $\kappa$ is a constant $\kappa_K$ in any
element $K$ of $\cT_h$ for all $\cT_h \in \cG$.
%The constant value of $\kappa$ in an element $K$ is denoted by $\kappa_K$.

We denote by $h_K$, $\bx_K$, $\rho_K$, and $\bn_K$ the diameter, the \emph{incentre},
the \emph{inradius} of simplex $K$,
and the unit outward-facing normal vector to the boundary $\partial K$,
respectively.
The family of partitions $\cG$ is assumed to be regular, i.e.
there exists a constant $C>0$ such that
\begin{equation}
\label{eq:shapereg}
  \sup\limits_{\cT_h \in \cG} \max\limits_{K \in \cT_h} \frac{h_K}{\rho_K} \leq C,
\end{equation}
but is not requested to be quasi-uniform, thereby permitting the use of locally
refined meshes.
Throughout the paper we use symbol $C$ for a generic constant
whose value is independent of $\kappa$ and any mesh-size and whose
actual numerical value can differ in different occurrences.
Furthermore, we define
\begin{equation}
\label{eq:patchK}
  \widetilde K = \operatorname{int} \left\{ \bigcup \overline{K'} :
  \overline{K'} \cap \overline K \neq \emptyset \right\}
\end{equation}
to be the patch consisting of $K$ and elements sharing at least one
common point with $K$.

The regularity assumption implies several facts that we will use
in the subsequent analysis.
Firstly, the number of elements in any patch is
uniformly bounded over the family $\cG$ as is the number of patches
containing a particular element.
Secondly, within each patch $\widetilde K$, a local quasi-uniformity condition
$ch_K \leq h_{K'} \leq C h_K$ holds for all elements $K' \subset \widetilde K$
with uniform constants $c>0$ and $C>0$ over the family $\cG$.
Thirdly, the elements are shape regular meaning that there exists a positive
constant $\cC_0$ such that
\begin{equation}
\label{eq:shaperegrhoK}
  \frac{1}{\cC_0} \rho_K \leq \rho_{K'} \leq \cC_0 \rho_K
\end{equation}
for all elements $K' \subset \widetilde K$, all $K\in\cT_h$, and all $\cT_h \in \cG$.

The coefficient $\kappa$ is assumed to be piecewise constant, and such that for
some constant $C>0$ the following conditions
hold for all triangulations $\cT_h\in \cG$ and all elements $K\in\cT_h$:
\begin{align}
  \label{eq:kappacond1}
  &\text{if}\quad \kappa_K\neq 0 \quad\text{then}\quad
  \kappa_{K'} \leq C \kappa_K \quad \text{for all }K'\subset\widetilde K;
\\
  \label{eq:kappacond2}
  &\text{if}\quad \kappa_K= 0 \quad\text{then}\quad
  \kappa_{K'} \leq C \quad \text{for all }K'\subset \widetilde K.
\end{align}
These assumptions rule out the case of arbitrarily high jumps
in values $\kappa_K$ between neighbouring elements.
% On the other hand, it is easy to see that these assumptions are satisfied for example if
% $\kappa_K = |K|^{-1} \int_K \kappa(\bfx) \dx$ for a nonnegative function
% $\kappa \in C^{\infty}(\overline\Omega)$.

One consequence of assumptions \ttg{eq:kappacond1}--\ttg{eq:kappacond2}
together with the quasi-uniformity of $h_K$ is
existence of a constant $C>0$ such that
for all $\cT_h\in\cG$, all $K \in \cT_h$, and all elements
$K' \subset \widetilde K$, we have
\begin{equation}
\label{eq:mincond}
 C^{-1} \min\{h_{K'},\kappa_{K'}^{-1} \} \leq
        \min\{h_{K},\kappa_{K}^{-1} \} \leq
      C \min\{h_{K'},\kappa_{K'}^{-1} \}.
\end{equation}
The quantity $\min\{h_{K},\kappa_{K}^{-1} \}$ appears extensively
throughout the paper, and for the avoidance of doubt, we note explicitly that
\begin{equation}
\label{eq:kappa0}
   \min\{h_{K},\kappa_{K}^{-1} \} = h_K
   \quad\text{if }\kappa_K = 0.
\end{equation}

Let $X_h$ be the space of continuous and piecewise affine functions with respect
to the partition $\cT_h$, and consider the subspace
$V_h = \{ v_h \in X_h : v_h = 0 \text{ on } \GammaD \}$.
The finite element approximation $u_h \in V_h$ of \ttg{eq:modpro} is then given by
\begin{equation}
  \label{eq:FEM}
  \cB(u_h, v_h ) = \cF(v_h) \quad \forall v_h \in V_h.
\end{equation}

Finally, we use local counterparts of the bilinear and linear forms defined by
$$
  \cB_K(u,v) = \int_K (\bnabla u \cdot \bnabla v + \kappa_K^2 u v ) \dx; \quad
  \cF_K(v) = \int_K f v \dx + \int_{\GammaN\cap\partial K} \gN v \ds.
$$
The associated global and local energy norms $\trinorm{\cdot}$ and $\trinorm{\cdot}_K$
are defined by $\trinorm{v}^2 = \cB(v,v)$ and $\trinorm{v}_K^2 = \cB_K(v,v)$,
respectively.
Analogously, we use $\norm{\cdot}$ and $\norm{\cdot}_K$ for
the $L^2(\Omega)$ and $L^2(K)$ norms, respectively.

\section{A Posteriori Error Estimator}\label{se:AEE}

\subsection{Trace inequalities on simplices}
The derivation of complementarity based error estimates
for problems with nonhomogeneous Neumann boundary conditions
requires certain types of trace inequalities.
Moreover, the constants appearing in these inequalities are present
in the final error bounds.
We derive two new trace inequalities
for simplices together with explicit formulas for the corresponding
constants.

\begin{lemma}\label{le:trace}
Let $K$ be a $d$-dimensional non-degenerate simplex and let $\gamma$ be one of its facets.
Let $h_K$ be the diameter of $K$ and $\kappa_K \geq 0$ a constant.
Let $v \in H^1(K)$ and let $\bar v_\gamma$ denote the average value of $v$ on $\gamma$.
Then
\begin{align}
\label{eq:trace2}
\norm{v}_{\gamma} &\leq \CT \trinorm{v}_K  \quad\text{for } \kappa_K > 0,
\\
\label{eq:trace1}
\norm{v - \bar v_\gamma}_{\gamma} &\leq \oCT \trinorm{v}_K,
\end{align}
hold with constants $\CT,\oCT > 0$ given by
\begin{align*}
  \CT^2 &= \frac{|\gamma|}{d|K|} \frac{1}{\kappa_K}
     \sqrt{(2h_K)^2 + (d/\kappa_K)^2},
\\
     \oCT^2 &= \frac{|\gamma|}{d|K|} \min\{h_K/\pi,\kappa_K^{-1}\}
  \left(2h_K + d \min\{h_K/\pi,\kappa_K^{-1}\} \right).
\end{align*}
\end{lemma}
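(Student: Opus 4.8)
The plan is to derive both inequalities from a single divergence--theorem identity on $K$, reducing each to an elementary scalar estimate. First I would fix the vertex $\b a$ of $K$ opposite the facet $\gamma$ and introduce the field $\b q(\bfx) = \frac{|\gamma|}{d|K|}(\bfx - \b a)$, which has constant divergence $\ddiv \b q = |\gamma|/|K|$. On $\gamma$ the quantity $(\bfx - \b a)\cdot\bn_K$ equals the height of $K$ over $\gamma$, namely $d|K|/|\gamma|$, so that $\b q\cdot\bn_K = 1$ there, while on every facet through $\b a$ the vector $\bfx - \b a$ is tangential and hence $\b q\cdot\bn_K = 0$. Applying the divergence theorem to $v^2\b q$ (by density it suffices to argue for smooth $v$) then yields
\begin{equation*}
  \norm{v}_\gamma^2 = \frac{|\gamma|}{|K|}\norm{v}_K^2 + \frac{2|\gamma|}{d|K|}\int_K v\,(\bfx - \b a)\cdot\bnabla v\dx .
\end{equation*}

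To prove \eqref{eq:trace2} I would estimate $|\bfx - \b a|\le h_K$ and apply Cauchy--Schwarz to the last integral, obtaining $\norm{v}_\gamma^2 \le \frac{|\gamma|}{d|K|}\big(d\,b^2 + 2h_K\,ab\big)$ with the shorthand $a = \norm{\bnabla v}_K$ and $b = \norm{v}_K$. Since $\trinorm{v}_K^2 = a^2 + \kappa_K^2 b^2$, it remains to verify the scalar inequality $d b^2 + 2h_K ab \le \kappa_K^{-1}\sqrt{(2h_K)^2 + (d/\kappa_K)^2}\,(a^2 + \kappa_K^2 b^2)$. I would do this by checking that the corresponding quadratic form in $(a,b)$ is positive semidefinite, equivalently by an optimally weighted arithmetic--geometric mean inequality; this reproduces exactly $\CT$.

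For \eqref{eq:trace1} the essential observation is that $\bar v_\gamma$ is the $L^2(\gamma)$--projection of $v$ onto the constants, so $\norm{v - \bar v_\gamma}_\gamma \le \norm{v - c}_\gamma$ for every constant $c$; this lets me replace the facet average by a convenient constant and split into two regimes. If $\kappa_K^{-1}\le h_K/\pi$ (reaction dominated) I take $c = 0$ and invoke \eqref{eq:trace2}: $\norm{v - \bar v_\gamma}_\gamma \le \norm{v}_\gamma \le \CT\trinorm{v}_K$, while the elementary bound $\sqrt{(2h_K)^2 + (d/\kappa_K)^2}\le 2h_K + d/\kappa_K$ gives $\CT\le\oCT$, since here $\oCT^2 = \frac{|\gamma|}{d|K|}\kappa_K^{-1}(2h_K + d\kappa_K^{-1})$. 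If instead $\kappa_K^{-1} > h_K/\pi$ (diffusion dominated, including $\kappa_K = 0$) I take $c = \bar v_K$, the mean of $v$ over $K$, apply the identity above to $w = v - \bar v_K$ (for which $\bnabla w = \bnabla v$), and use the sharp Poincar\'e inequality for convex domains (Payne--Weinberger) in the form $\norm{v - \bar v_K}_K \le \frac{h_K}{\pi}\norm{\bnabla v}_K$. Substituting $\norm{w}_K \le \frac{h_K}{\pi}\norm{\bnabla v}_K$ into both volume terms and bounding $\norm{\bnabla v}_K \le \trinorm{v}_K$ then reproduces $\oCT$ with $\min\{h_K/\pi,\kappa_K^{-1}\} = h_K/\pi$.

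The main obstacle is precisely the mean--value subtraction in \eqref{eq:trace1}: used naively, the identity leaves the facet average $\bar v_\gamma$ inside the volume norm $\norm{v - \bar v_\gamma}_K$, for which no sharp Poincar\'e estimate is available. The projection property is what unlocks the argument, and it does so in a way that also explains the structure of $\oCT$: reusing \eqref{eq:trace2} handles the reaction--dominated case and produces the $\kappa_K^{-1}$ scaling, whereas the classical convex Poincar\'e inequality handles the diffusion--dominated case and is the source of the constant $\pi$, the two regimes together accounting for the $\min\{h_K/\pi,\kappa_K^{-1}\}$ in $\oCT$. Everything else is the routine verification of the scalar inequalities and the elementary simplifications of the constants.
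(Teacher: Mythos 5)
Your proof is correct and takes essentially the same approach as the paper: the identical divergence-theorem identity built from the field $\bfx - \bfx_0$ (your $\b{q}$ is just this field rescaled by $|\gamma|/(d|K|)$), the projection property of the facet and element averages, and the Payne--Weinberger inequality, producing exactly the stated $\CT$ and $\oCT$. The only differences are organizational: the paper settles \eqref{eq:trace2} by two Cauchy--Schwarz factorizations, namely $\norm{v}_K \le \kappa_K^{-1}\trinorm{v}_K$ and $2h_K\norm{\bnabla v}_K + d\norm{v}_K \le \bigl((2h_K)^2 + (d/\kappa_K)^2\bigr)^{1/2}\trinorm{v}_K$, rather than your positive-semidefiniteness check (which does verify), and it proves \eqref{eq:trace1} without any case split by inserting the unified bound $\norm{v - \bar v_K}_K \le \min\{h_K/\pi, \kappa_K^{-1}\}\trinorm{v}_K$ into the identity applied to $v - \bar v_K$, whereas you handle the reaction- and diffusion-dominated regimes separately (both routes yield the stated $\oCT$).
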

\begin{proof}
Let $\bx_0$ be the vertex of $K$ opposite to the facet $\gamma$.
Define $\bphi(\bx) = \bx - \bx_0$ for $\bx \in K$.
Note that $\bn_K\cdot\bphi = 0$ on $\partial K \setminus \gamma$ and
$\bn\cdot\bphi = \tilde\varrho_K$ on $\gamma$,
where $\bn_K$ is the unit outward normal to $\partial K$ and
$\tilde\varrho_K$ is the distance between $\gamma$ and $\bx_0$, i.e. the altitude of $K$.
In particular, $\tilde\varrho_K = d |K|/|\gamma|$.

Let $v\in H^1(K)$ then
\begin{multline}
\label{eq:pt1}
  \frac{d |K|}{|\gamma|}  \norm{v}_\gamma^2
 =\int_{\gamma} v^2 \bn_K\cdot\bphi \ds
 =\int_{\partial K} v^2 \bn_K\cdot\bphi \ds
 = \int_K \ddiv(v^2\bphi) \dx
\\
 = 2\int_K v \bphi\cdot\bnabla v \dx + \int_K v^2 \ddiv \bphi \dx
 \leq \norm{v}_K \left( 2h_K \norm{\bnabla v}_K + d \norm{v}_K \right).
\end{multline}

% \begin{equation}
% %\label{eq:pt1}
%   \int_{\partial K} v^2 \bn_K\cdot\bphi \ds
%  =\int_{\gamma} v^2 \bn_K\cdot\bphi \ds
%  =\frac{d+1}{|\gamma|} |K| \norm{v}_\gamma^2
% \end{equation}
% and also
% \begin{multline}
% %\label{eq:pt2}
%   \int_{\partial K} v^2 \bn_K\cdot\bphi \ds
%  = \int_K \ddiv(v^2\bphi) \dx
%  = 2\int_K v \bphi\cdot\bnabla v \dx + \int_K v^2 \ddiv \bphi \dx
% \\
%  \leq \norm{v}_K \left( 2h_K \norm{\bnabla v}_K + d \norm{v}_K \right).
% \end{multline}

Using $\norm{v}_K \leq \kappa^{-1} \trinorm{v}_K$ and
$2h_K \norm{\bnabla v}_K + d \norm{v}_K \leq \left((2h_K)^2 + (d/\kappa_K)^2 \right)^{1/2}
\trinorm{v}_K$
in \ttg{eq:pt1}, we obtain \ttg{eq:trace2}.

Now, consider $\bar v_\gamma = |\gamma|^{-1} \int_{\gamma} v \ds$
and $\bar v_K = |K|^{-1} \int_K v \dx$.
Applying estimate \ttg{eq:pt1} to $v-\bar v_K$ yields
\begin{equation}
  \label{eq:pt2}
  \norm{v - \bar v_\gamma}_\gamma^2 \leq \norm{v - \bar v_K}_\gamma^2
  \leq \frac{|\gamma|}{d |K|} \norm{v - \bar v_K}_K
      \left( 2h_K \norm{\bnabla v}_K + d \norm{v - \bar v_K}_K \right).
\end{equation}
The norm $\norm{v - \bar v_K}_K$ can be bounded in either of the
two ways:
%\begin{align*}
$$
  \norm{v - \bar v_K}_K
  \leq \norm{v}_K \leq \kappa_K^{-1}\trinorm{v}_K
\quad\text{and}\quad
  \norm{v - \bar v_K}_K
  \leq \frac{h_K}{\pi} \norm{\bnabla v}_K \leq \frac{h_K}{\pi} \trinorm{v}_K,
$$
%\end{align*}
where we use Poincar\'e inequality \cite{PayWei:1960}.
Thus, $\norm{v-\bar v_K}_K \leq \min\{h_K/\pi,\kappa_K^{-1}\} \trinorm{v}_K$.
Using this estimate and inequality $\norm{\bnabla v}_K \leq \trinorm{v}_K$
in \ttg{eq:pt2}, we derive \ttg{eq:trace1}.
% 
% =====
% Poincar\'e inequality $\norm{v-\bar v_K}_K \leq (h_K/\pi) \norm{\bnabla v}_K$,
% see \cite{PayWei:1960}, and two-dimensional Cauchy-Schwarz inequality
% enable the following two bounds:
% \begin{align*}
%   2h_K \norm{\bnabla v}_K + d \norm{v - \bar v_K}_K
%   &\leq (2h_K + d h_K/\pi) \norm{\bnabla v}_K,
% \\
%   2h_K \norm{\bnabla v}_K + d \norm{v - \bar v_K}_K
%   &\leq \left( (2h_K)^2+(d/\kappa_K)^2 \right)^{1/2} \trinorm{v}_K
%    \leq (2h_K + d/\kappa_K) \trinorm{v}_K.
% \end{align*}
% Thus,
% \begin{equation}
%   \label{eq:pt3}
%   2h_K \norm{\bnabla v}_K + d \norm{v - \bar v_K}_K
%   \leq \left( 2h_K + d \min\{h_K/\pi,\kappa_K^{-1}\} \right) \trinorm{v}_K.
% \end{equation}
% Similarly, we derive estimate
% $\norm{v-\bar v_K}_K \leq \min\{h_K/\pi,\kappa^{-1}\} \trinorm{v}_K$,
% see also \cite[p.~228]{robustaee:2010}.
% Application of this estimate together with \ttg{eq:pt3}
% in \ttg{eq:pt2} yields \ttg{eq:trace1}.
% =====
\end{proof}

The constants $\CT$ and $\oCT$ from Lemma~\ref{le:trace}
have the correct asymptotic behaviour with respect to $h_K$ and $\kappa_K$,
but they are not optimal in terms of absolute values. Optimal values for trace constants
are not known in general, but their two-sided bounds can be computed numerically
for quite general domains \cite{SebVej:2013}.
Further, let us note that Lemma~\ref{le:trace} and its proof is similar to
the multiplicative trace inequality \cite{DolFeiSch:2002}.

\subsection{General framework}

We define
$\Pi_K : L^2(K) \rightarrow \mathbb{P}^1(K)$ to be the $L^2(K)$-orthogonal
projector to the space of affine functions defined over an element $K \in \cT_h$.
Similarly, for a facet $\gamma \subset \partial K$ we define
$\Pi_\gamma : L^2(\gamma) \rightarrow \mathbb{P}^1(\gamma)$ to be the $L^2(\gamma)$-orthogonal projector to the space of affine functions defined
over the facet $\gamma \subset \partial K$.
The following generalization of the corresponding result in \cite{robustaee:2010}
forms the basis of our approach:

%  $\displaystyle \Pi_K f \in P^1(K) :\quad  (f - \Pi_K f, \varphi)_K = 0
%   \quad \forall \varphi \in P^1(K) $

\begin{lemma}
\label{le:1}
Let $u \in V$ be the weak solution \ttg{eq:weakf} and
$u_h \in V$ be an arbitrary function.
Further let $\btau \in\Hdiv$ be such that
$\Pi_K f + \ddiv \btau = 0$
in those elements $K\in\cT_h$ where $\kappa_K = 0$
and
$\btau \cdot \bn = \Pi_\gamma \gN$
on facets $\gamma \subset \GammaN\cap\partial K$.
Then
$$
\trinorm{u - u_h}^2 \leq \sum\limits_{K\in\cT_h}
  \left[ \eta_K(\btau) + \osc_K(f) +
    \sum\limits_{\gamma \subset \GammaN\cap\partial K} \osc_{\gamma}(\gN)
  \right]^2
%\quad \forall \btau_K \in\b{\Sigma}_K
%\quad \forall \btau \in\Hdiv,
$$
where $\eta_K(\btau) \geq 0$, $\osc_K(f)$, and $\osc_{\gamma}(\gN)$
are defined by
\begin{align}
\label{eq:etaK}
  &\eta_K^2(\btau) = \left\{
  \begin{array}{l}
      \norm{\btau - \bnabla u_h}_K^2
          + \kappa_K^{-2} \norm{\Pi_K f - \kappa_K^2 u_h + \ddiv \btau }_K^2
       \quad \text{if } \kappa_K > 0,
   \\
     \norm{\btau - \bnabla u_h}_K^2
     \quad \text{if } \kappa_K = 0,
  \end{array} \right.
% \\
%   \eta_K(\btau) &= \left( \norm{\btau - \bnabla u_h}_K^2
%    + \min\left\{\frac{h_K}{\pi},\frac{1}{\kappa_K}\right\}^{2}
%       \norm{\Pi_K f - \kappa_K^2 u_h + \ddiv \btau }_K^2
%    \right)^{\frac12}
% \\ \label{eq:etaK}
%   &\quad + \min\{\CT^K,\CT^{K,\kappa_K} \} \norm{\Pi^K_\gamma \gN - \btau \cdot \bn}_{\GammaN\cap\partial K},
\\ \nonumber
&\osc_K(f) = \min \left\{ \frac{h_K}{\pi}, \frac{1}{\kappa_K} \right\}
   \norm{f - \Pi_K f}_K,
\\ \nonumber
&\osc_{\gamma}(\gN) =
   \min\{\CT,\oCT\} \norm{\gN - \Pi_\gamma \gN}_{\gamma}.
\end{align}
\end{lemma}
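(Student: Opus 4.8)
The plan is to follow the complementarity (Prager--Synge / hypercircle) strategy, taking advantage of the fact that the error $e = u - u_h$ lies in $V$ and hence vanishes on $\GammaD$. I would start from $\trinorm{u-u_h}^2 = \cB(u-u_h,e)$ and use the weak formulation \ttg{eq:weakf} to replace $\cB(u,e)$ by $\cF(e)$, obtaining
\[
  \trinorm{u-u_h}^2 = \cF(e) - \cB(u_h,e)
  = \int_\Omega f e \dx + \int_\GammaN \gN e \ds
    - \int_\Omega (\bnabla u_h\cdot\bnabla e + \kappa^2 u_h e)\dx.
\]
The decisive step is to bring in the reconstructed flux $\btau$: I add and subtract $\int_\Omega \btau\cdot\bnabla e \dx$ and integrate by parts in the term $-\int_\Omega \btau\cdot\bnabla e\dx$, which is legitimate because $\btau\in\Hdiv$ and $e=0$ on $\GammaD$, so that only the $\GammaN$ part of the boundary contributes. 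This produces the residual identity
\[
  \trinorm{u-u_h}^2 = \int_\Omega (\btau-\bnabla u_h)\cdot\bnabla e \dx
    + \int_\Omega (f+\ddiv\btau-\kappa^2 u_h)\,e \dx
    + \int_\GammaN (\gN-\btau\cdot\bn)\,e \ds.
\]

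Next I would split the volume integrals element by element and insert the projectors, writing $f=\Pi_K f+(f-\Pi_K f)$ on each $K$ and grouping $\int_K(\btau-\bnabla u_h)\cdot\bnabla e + \int_K(\Pi_K f+\ddiv\btau-\kappa_K^2 u_h)e$ into a single term. For $\kappa_K>0$ a vector-valued Cauchy--Schwarz inequality, together with $\kappa_K\norm{e}_K\leq\trinorm{e}_K$, bounds this grouped term by $\eta_K(\btau)\trinorm{e}_K$; for $\kappa_K=0$ the equilibration hypothesis $\Pi_K f+\ddiv\btau=0$ annihilates the zeroth-order part and leaves exactly $\norm{\btau-\bnabla u_h}_K\norm{\bnabla e}_K=\eta_K(\btau)\trinorm{e}_K$. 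The residual volume term $\int_K(f-\Pi_K f)e\dx$ and the boundary term $\int_\gamma(\gN-\Pi_\gamma\gN)e\ds$ (where I use the facet equilibration $\btau\cdot\bn=\Pi_\gamma\gN$ on $\GammaN$) are each handled by orthogonality: since $f-\Pi_K f$ and $\gN-\Pi_\gamma\gN$ are $L^2$-orthogonal to constants, I may subtract the means $\bar e_K$ and $\bar e_\gamma$ before applying Cauchy--Schwarz.

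The observation that makes these factors robust is that subtracting the mean minimises the $L^2$ distance to constants, so simultaneously $\norm{e-\bar e_K}_K\leq\norm{e}_K\leq\kappa_K^{-1}\trinorm{e}_K$ and $\norm{e-\bar e_K}_K\leq(h_K/\pi)\norm{\bnabla e}_K\leq(h_K/\pi)\trinorm{e}_K$ by the Poincar\'e inequality; taking the smaller bound yields the factor $\min\{h_K/\pi,\kappa_K^{-1}\}$ appearing in $\osc_K(f)$. The same dichotomy on the facet, now combined with the two trace inequalities \ttg{eq:trace2} and \ttg{eq:trace1} of Lemma~\ref{le:trace}, yields the factor $\min\{\CT,\oCT\}$ in $\osc_\gamma(\gN)$. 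Collecting all contributions and assigning each Neumann facet to its owning element gives
\[
  \trinorm{u-u_h}^2 \leq \sum_{K\in\cT_h}
    \left[ \eta_K(\btau) + \osc_K(f)
      + \sum_{\gamma\subset\GammaN\cap\partial K}\osc_\gamma(\gN) \right]
    \trinorm{e}_K,
\]
and a final discrete Cauchy--Schwarz over the elements, using $\sum_K\trinorm{e}_K^2=\trinorm{u-u_h}^2$, cancels one power of $\trinorm{u-u_h}$ and delivers the claim.

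I expect the main difficulty to be bookkeeping rather than conceptual. The whole argument hinges on arranging the residual so that every individual contribution is weighted by exactly $\trinorm{e}_K$ --- and not by $\norm{\bnabla e}_K$ or $\kappa_K\norm{e}_K$ in isolation --- so that the grouped Cauchy--Schwarz is sharp enough to reproduce the single quantity $\eta_K(\btau)$ with reliability constant equal to one. The split $\kappa_K=0$ versus $\kappa_K>0$ must be carried consistently through both the volume residual and the oscillation/trace factors, and the flux hypotheses have to be invoked precisely where they hold: divergence equilibration only in the elements with $\kappa_K=0$, and facet equilibration only on $\GammaN$.
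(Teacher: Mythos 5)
Your proposal is correct and follows essentially the same route as the paper: the same flux-based residual identity (you derive it globally by one integration by parts, the paper elementwise via the divergence theorem, which is equivalent for $\btau\in\Hdiv$), the same grouped Cauchy--Schwarz producing $\eta_K(\btau)\trinorm{e}_K$ in both the $\kappa_K>0$ and $\kappa_K=0$ cases, the same orthogonality-plus-Poincar\'e argument for $\osc_K(f)$, the same pair of trace inequalities \ttg{eq:trace2} and \ttg{eq:trace1} for $\osc_\gamma(\gN)$, and the same final Cauchy--Schwarz over elements. The only cosmetic difference is that you substitute $v=u-u_h$ at the outset, whereas the paper keeps $v\in V$ arbitrary until the last line.
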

\begin{proof}
Let $v \in V$ be arbitrary.
Using the weak formulation \ttg{eq:weakf} for $u$,
the fact that the global forms $\cB$ and $\cF$ are sums of the local
forms $\cB_K$ and $\cF_K$, and the divergence theorem, we obtain
the following identity
\begin{multline}
\label{eq:errid}
  \cB(u - u_h,v) = \sum\limits_{K\in\cT_h} \cF_K(v) - \cB_K(u_h, v)
= \sum\limits_{K\in\cT_h} \left[
  \int_K (\btau - \bnabla u_h) \cdot \bnabla v\dx
\right.
\\
+ \int_K (\Pi_K f - \kappa_K^2 u_h + \ddiv \btau) v \dx
+ \sum\limits_{\gamma\subset\GammaN\cap\partial K}
  \int_{\gamma} (\Pi_\gamma \gN - \btau\cdot\bn) v \ds
\\
\left.
+ \int_K (f - \Pi_K f) v \dx
+ \sum\limits_{\gamma\subset\GammaN\cap\partial K} \int_{\gamma} (\gN - \Pi_\gamma \gN) v \ds
\right].
\end{multline}
Now we estimate the five integrals on the right-hand side of \ttg{eq:errid}.
The sum of the first two integrals is clearly bounded by
% $$
%   \left( \norm{\btau - \bnabla u_h}_K^2
%    + \kappa_K^{-2} \norm{\Pi_K f - \kappa_K^2 u_h + \ddiv \btau }_K^2
%    \right)^{\frac12} \trinorm{v}_K.
% $$
$
  \eta_K(\btau) \trinorm{v}_K
$
for both $\kappa_K > 0$ and $\kappa_K = 0$.
The third integral on the right-hand side of \ttg{eq:errid} vanishes
since $\btau\cdot\bn = \Pi_\gamma \gN$ on $\GammaN$.

The fourth integral can be estimated as
$$
  \int_K (f - \Pi_K f) v \dx \leq \min \left\{ \frac{h_K}{\pi}, \frac{1}{\kappa_K} \right\}
   \norm{f - \Pi_K f}_K \trinorm{v}_K
 = \osc_K(f) \trinorm{v}_K,
$$
where the constant $h_K/\pi$ comes from the Poincar\'e inequality \cite{PayWei:1960}
and $1/\kappa_K$ comes from the inequality $\norm{v}_K \leq \kappa_K^{-1}\trinorm{v}_K$,
see \cite[p.~228]{robustaee:2010} for details.
The last integral in \ttg{eq:errid} can be bounded in the following two ways:
\begin{align*}
\int_{\gamma} (\gN - \Pi_\gamma \gN) v \ds
&\leq \norm{\gN - \Pi_\gamma \gN}_{\gamma} \norm{ v }_{\gamma},
\\
  \int_{\gamma} (\gN - \Pi_\gamma \gN) v \ds
&=\int_{\gamma} (\gN - \Pi_\gamma \gN) (v-\bar v_\gamma) \ds
\leq \norm{\gN - \Pi_\gamma \gN}_{\gamma}
     \norm{ v - \bar v_\gamma }_{\gamma},
\end{align*}
where $\bar v_\gamma = |\gamma|^{-1} \int_{\gamma} v \ds$.
Employing trace inequalities \ttg{eq:trace2} and \ttg{eq:trace1} we end up
with the estimate
$$
\int_{\gamma} (\gN - \Pi_\gamma \gN) v \ds \leq \osc_{\gamma}(\gN) \trinorm{v}_K.
$$

Hence,
$$
 \cB(u - u_h,v) \leq \sum\limits_{K\in\cT_h}
\left[ \eta_K(\btau) + \osc_K(f) +
  \sum\limits_{\gamma\subset\GammaN\cap\partial K} \osc_{\gamma}(\gN) \right] \trinorm{v}_K.
$$
%
% The facts that $\norm{v}_K \leq \kappa_K^{-1}\trinorm{v}_K$,
% $\int_K (f - \Pi_K f) c \dx = 0$ for any constant $c$, and Poincar\'e inequality
% $\inf_{c\in\R} \norm{v - c}_K \leq h_K \pi^{-1} \norm{\bnabla v}_K$
% \cite{PayWei:1960}, enable to bound the last integral in \ttg{eq:errid} as
% $$
%   \int_K (f - f_K) v \dx \leq \min \left\{ \frac{h_K}{\pi}, \frac{1}{\kappa_K} \right\}
%    \norm{f - \Pi_K f}_K \trinorm{v}_K.
% $$
% The sum of the first two integrals on the right-hand side of \ttg{eq:errid}
% is readily bounded by $\eta_K(\btau)\trinorm{v}_K$ and hence
% $$
%  \cB(u - u_h,v) \leq \sum\limits_{K\in\cT_h}
% \left[ \eta_K(\btau) + \min \left\{ \frac{h_K}{\pi}, \frac{1}{\kappa_K} \right\}
%    \norm{f - \Pi_K f}_K  \right] \trinorm{v}_K.
% $$
%
The Cauchy-Schwarz inequality and substitution $v=u-u_h$ finishes the proof.
\end{proof}

The vector field $\btau \in \Hdiv$ is referred to as
a \emph{flux reconstruction} and its specific choice is crucial
for the efficiency and robustness of the resulting error estimators.
We reconstruct the flux $\btau\in\Hdiv$ in two steps.
Firstly, we find boundary fluxes $g_K$ satisfying the following conditions:
\begin{alignat}{2}
\label{eq:gKlin}
g_K|_\gamma &\in \mathbb{P}^1(\gamma)
&\quad&\text{for all facets } \gamma \subset \partial K,
\\ \label{eq:gKNeu}
g_K &= \Pi_\gamma \gN
&\quad&\text{for all facets } \gamma \subset \GammaN\cap\partial K,
\\
\label{eq:gKconsist}
  g_K + g_{K'} &= 0
  &\quad&\text{on facets } \gamma = \partial K \cap \partial K'.
\end{alignat}
% where $\theta_n \in \mathbb{P}^1(K)$ are the standard affined finite element
% shape functions in the simplex $K$, i.e., the are equal to one at one of the
% vertices and vanish at all the other vertices.
Secondly, we locally reconstruct vector fields $\btau_K \in \Hdiv[K]$ satisfying
boundary conditions $\btau_K \cdot \bn_K = g_K$ on $\partial K$.
The values of $\btau$ in the interior of the elements will be presented
in detail below. Irrespective of this, the
resulting vector field $\btau$ is defined elementwise by $\btau|_K = \btau_K$
for all $K\in \cT_h$, so that $\btau\in\Hdiv$ due to the
consistency condition \ttg{eq:gKconsist}.

Conditions \ttg{eq:gKlin}--\ttg{eq:gKconsist} do not determine a unique set of
fluxes. The specific choice of fluxes satisfying \ttg{eq:gKlin}--\ttg{eq:gKconsist}
will be crucial to the robustness of the associated estimator.
We say that boundary fluxes $g_K$ are equilibrated with respect to linear
functions if the following condition holds:
\begin{equation}
\label{eq:gKequilib}
  \int_K f \theta \dx %\cF_K(\theta)
  - \cB_K(u_h,\theta)
  %\int_K (f \theta - \bnabla u_h \cdot \bnabla \theta - \kappa_K^2 u_h \theta) \dx
  + \int_{\partial K} g_K \theta \ds
 = 0 \quad\forall \theta \in \mathbb{P}^1(K).  %n=1,2,\dots,d,
\end{equation}
or, equally well,
\begin{equation*}
%\label{eq:tauequilib}
  \int_K (\btau - \bnabla u_h) \cdot \bnabla \theta \dx
  + \int_K (f - \kappa_K^2 u_h + \ddiv\btau) \theta \dx = 0
\quad\forall \theta\in \mathbb{P}^1(K).
\end{equation*}

Fluxes satisfying the
equilibration \ttg{eq:gKequilib} yield accurate error bounds for small $\kappa$,
but are not robust for large values of $\kappa$ \cite{AinBab:1999}.
Therefore, we will follow the approach from \cite{AinBab:1999}
for large values of $\kappa$.

\subsection{Robust equilibration of boundary fluxes}
\label{se:robequilib}
A detailed algorithm for the construction of boundary fluxes $g_K$
satisfying conditions \ttg{eq:gKlin}--\ttg{eq:gKequilib}
can be found in \cite{AinOde:2000}.
A modification of this approach that is robust for large values
of $\kappa$ is described in \cite{AinBab:1999} and
\cite{robustaee:2010} and we will briefly recall it here.

The idea is to replace the affine functions in \ttg{eq:gKequilib} by
their approximate minimum energy extensions. Clearly, it suffices
to satisfy condition \ttg{eq:gKequilib} for the barycentric coordinates
$\theta_n$, $n=1,2,\dots,d+1$, in $K$.
The approximate minimum energy extensions $\theta^*_n$
of $\theta_n$ are defined in \cite{AinBab:1999} for $d=1$, $2$, and $3$ dimensions.
Here, we define them for general $d$-dimensional simplices.

Consider a simplex $K$ with vertices $\bfx_1, \dots, \bfx_{d+1}$
and facets $\gamma_1, \dots, \gamma_{d+1}$ opposite to these vertices.
The standard basis functions $\theta_n$ are determined by the conditions
$\theta_n(\bfx_m) = \delta_{nm}$, $n,m \in \{1,2,\dots,d+1\}$.
For each $n = 1,2,\dots,d+1$, define approximate minimum energy extension
$\theta^*_n$ as follows.
If $\kappa_K \rho_K \leq 1$ then $\theta^*_n = \theta_n$.
If $\kappa_K \rho_K > 1$ then define a point $\bfx_P$
by its barycentric coordinates
$\lambda_i(\bfx_P) = \delta$ for $i\neq n$
and $\lambda_n(\bfx_P) = 1-d\delta$
with $\delta = \min\{1,1/(\kappa_K\rho_K)\}/d$,
and consider a submesh in $K$ created by simplices
$K_i = \overline{\gamma_i \bfx_P}$, $i=1,2,\dots,d+1$.
The approximate minimum energy extension $\theta^*_n$ is then defined
as a piecewise affine function with respect to this submesh such that
$\theta^*_n(\bfx_n) = 1$, $\theta^*_n(\bfx_P) = 0$, and
$\theta^*_n(\bfx_i) = 0$ for all $i\neq n$.
A two-dimensional illustration of functions $\theta^*_n$ is provided in
Figure~\ref{fi:theta}.

\begin{figure}
\begin{center}
\newlength{\ww}
\setlength{\ww}{1.33mm}
\includegraphics[width=40\ww]{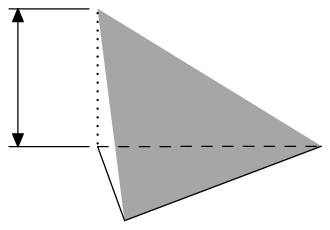}%
\makebox[0pt][l]{\hspace{-20\ww}\raisebox{23\ww}[0pt][0pt]{$\theta_3^*$}}%
\makebox[0pt][l]{\hspace{-41\ww}\raisebox{17\ww}[0pt][0pt]{$1$}}%
\makebox[0pt][l]{\hspace{-26\ww}\raisebox{-2\ww}[0pt][0pt]{$\bfx_1$}}%
\makebox[0pt][l]{\hspace{0.5\ww}\raisebox{9\ww}[0pt][0pt]{$\bfx_2$}}%
\makebox[0pt][l]{\hspace{-32\ww}\raisebox{7\ww}[0pt][0pt]{$\bfx_3$}}%
\qquad\qquad
\includegraphics[width=40\ww]{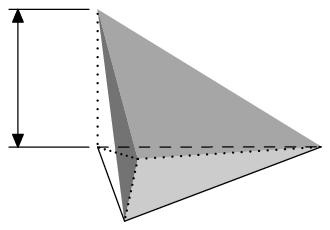}%
\makebox[0pt][l]{\hspace{-20\ww}\raisebox{23\ww}[0pt][0pt]{$\theta_3^*$}}%
\makebox[0pt][l]{\hspace{-41\ww}\raisebox{17\ww}[0pt][0pt]{$1$}}%
\makebox[0pt][l]{\hspace{-26\ww}\raisebox{-2\ww}[0pt][0pt]{$\bfx_1$}}%
\makebox[0pt][l]{\hspace{0.5\ww}\raisebox{9\ww}[0pt][0pt]{$\bfx_2$}}%
\makebox[0pt][l]{\hspace{-32\ww}\raisebox{7\ww}[0pt][0pt]{$\bfx_3$}}%
\makebox[0pt][l]{\hspace{-23.5\ww}\raisebox{5.5\ww}[0pt][0pt]{$\bfx_P$}}%
\end{center}
\caption{\label{fi:theta}
A graph of the approximate minimum energy extension $\theta^*_n$ for
$\kappa_K \rho_K \leq 1$ (left) and for $\kappa_K \rho_K > 1$ (right).
}
\end{figure}

It is easy to verify that $\theta^*_n$, $n\in\{1,2,\dots,d+1\}$, satisfy
\begin{itemize}
\itemsep=0pt
\item $\theta^*_n = \theta_n$ on the boundary $\partial K$;
\item if $\kappa_K \rho_K \leq 1$ then $\theta^*_n=\theta_n$ on $K$;
\item $C_1 h_K^{d-1} \min\{h_K, 1/\kappa_K\} \leq \norm{\theta^*_n}_K^2 \leq C_2 h_K^{d-1} \min\{h_K, 1/\kappa_K\}$;
\item $C_1 h_K^{d-1} \min\{h_K, 1/\kappa_K \}^{-1} \leq \norm{\bnabla \theta^*_n}_K^2 \leq C_2 h_K^{d-1} \min\{h_K, 1/\kappa_K \}^{-1}$.
\end{itemize}
These key features of the approximate minimum energy extensions
$\theta^*_n$ were identified already in \cite{AinBab:1999} and
are crucial for the robustness of the resulting fluxes.

The robust flux reconstruction is obtained in \cite{AinBab:1999} by replacing
functions $\theta_n$ by $\theta^*_n$ in \ttg{eq:gKequilib}
and finding the least squares minimizer of the system. This approach must
be modified to deal with case of variable $\kappa$ considered here.

First, we define
$$
  \varepsilon_K(\theta) = \cF_K(\theta) %\int_K f \theta_n \dx
  - \cB_K(u_h,\theta)
  + \int_{\partial K\setminus\GammaN} g_K \theta \ds
$$
for any $\theta \in H^1(K)$.
The robust equlibration procedure requires
\begin{equation}
\label{eq:equilib1}
%     \cF_K(\theta_n) %\int_K f \theta_n \dx
%   - \cB_K(u_h,\theta_n)
%   + \int_{\partial K\setminus\GammaN} g_K \theta_n \ds
%  = 0
\varepsilon_K(\theta_n) = 0
\quad\forall K\in\cT_h, \text{ where } \kappa_K \rho_K \leq 1,
\text{ and } n=1,2,\dots,d+1.
%\quad\forall K\in\cT_h^\mathrm{exact},\ n=1,2,\dots,d+1,
\end{equation}
%for all elements $K\in\cT_h$, where $\kappa_K \rho_K \leq 1$,
%and for all $n=1,2,\dots,d+1$.
For the other elements, %$K\in\cT_h$, where $\kappa_K \rho_K > 1$,
we impose a similar condition in a least-squares sense.
%In particular, fluxes $g_K$ are required to minimize
We obtain a constrained least-squares problem
to find $g_K$ satisfying \ttg{eq:gKlin}--\ttg{eq:gKconsist},
equality constraints \ttg{eq:equilib1} and minimizing
\begin{equation}
\label{eq:equilib2}
\sum\limits_{K\in\cT_h : \kappa_K \rho_K > 1}
\sum\limits_{n=1}^{d+1} \varepsilon_K(\theta^*_n)^2.
\end{equation}
% \begin{equation}
% \label{eq:equilib2}
%     \cF_K(\theta^*_n) %\int_K f \theta^*_n \dx
%   - \cB_K(u_h,\theta^*_n)
%   + \int_{\partial K\setminus\GammaN} g_K \theta^*_n \ds
%  \approx 0 \quad\forall n=1,2,\dots,d+1.
% \end{equation}
% where the symbol $\approx$ stands for the least-squares fit and
% \begin{align*}
% \cT_h^\mathrm{exact} &= \{ K \in \cT_h : \kappa_K \rho_K \leq 1
% %\text{ and } \bx_n \not\in\oGammaD\ \forall n=1,2,\dots,d
% \},
% %\bx_n \in \cN(K) \},
% \\
% \cT_h^\mathrm{approx} &= \cT_h \setminus \cT_h^\mathrm{exact}.
% \end{align*}
This global constrained least-squares problem
can be transformed into a series of small constrained least-squares
problems on patches of elements corresponding to vertices of $\cT_h$
as follows.

We define the set of vertices $\cN(\gamma)$ of a facet $\gamma$
of a simplex $K$ and functions $\psi_\gamma^m \in \mathbb{P}^1(\gamma)$
%\in \operatorname{span} \{\theta_n : \bx_n \in\cN(\gamma) \}
satisfying
$
  \int_\gamma \psi_\gamma^m \theta_n \ds = \delta_{mn}.
$
Further, we consider a fixed orientation $\sigma_{K,\gamma}$
of facets $\gamma$ of simplices $K \in \cT_h$.
The orientation $\sigma_{K,\gamma}$ is either $1$ or $-1$ and satisfies
$$
  \sigma_{K,\gamma} + \sigma_{K',\gamma} = 0
  \quad\text{on } \gamma=\partial K \cap \partial K'.
$$
Finally, we introduce the average and the jump flux across a common
facet of two neighbouring simplices $K$ and $K'$ as
$$
  \left\langle \frac{\partial u_h}{\partial\bn_K} \right\rangle
 = \frac12 \bn_K \cdot \left( \bnabla u_h|_K + \bnabla u_h|_{K'} \right)
\quad\text{and}\quad
  \left[ \frac{\partial u_h}{\partial\bn_K} \right]
 = \bn_K \cdot \left( \bnabla u_h|_K - \bnabla u_h|_{K'} \right).
$$
On the boundary $\partial\Omega$ we set
$\left\langle \partial u_h/\partial\bn_K \right\rangle =
\partial u_h/\partial\bn_K$ and $[\partial u_h/\partial\bn_K] = 0$.
The boundary flux $g_K$ on a facet $\gamma$ of a simplex $K$ is then
defined in the form
\begin{equation}
\label{eq:gKansatz}
  g_K = \left\langle \frac{\partial u_h}{\partial\bn_K} \right\rangle
  + \sigma_{K,\gamma} \sum\limits_{_m\in\cN(\gamma)} \alpha_\gamma^m \psi_\gamma^m.
\end{equation}
Notice that this construction of $g_K$ immediately guarantees
the consistency condition \ttg{eq:gKconsist}.
Furthermore, if $\gamma \subset \GammaN$ then the coefficients
$\alpha_\gamma^m$ are uniquely determined by \ttg{eq:gKNeu}.

Using \ttg{eq:gKansatz}, we can readily express $\varepsilon_K(\theta)$ as
\begin{equation}
\label{eq:loceps}
  \varepsilon_K(\theta) = D_K(\theta) +
  \sum\limits_{\gamma: \gamma \subset \partial K\setminus\GammaN, \gamma\ni\bx_n}
  \sigma_{K,\gamma} \alpha^n_\gamma,
\end{equation}
where
$$
  D_K(\theta) = \cF_K(\theta)
  - \cB_K(u_h,\theta)
  + \int_{\partial K\setminus\GammaN}
    \left\langle \frac{\partial u_h}{\partial\bn_K} \right\rangle
    \theta \ds.
$$
Note that the summation in \ttg{eq:loceps} is performed over those
facets $\gamma \subset \partial K \setminus\GammaN$ that contain
the vertex $\bx_n$.

Thus, the global constrained least-squares problem
\ttg{eq:equilib1}--\ttg{eq:equilib2} transforms into
the following small local constrained least-squares problems on patches of elements.
For all vertices $\bx_n$ in the partition $\cT_h$, we define
$\omega(\bx_n) = \{K \in \cT_h: \bx_n \in \overline K\}$ to be the set of
elements sharing the vertex $\bx_n$ and seek coefficients $\alpha^n_\gamma$
for $\gamma \not\subset \GammaN$, $\gamma\ni\bx_n$
satisfying equality constraints
\begin{equation}
\label{eq:loceq1}
  \varepsilon_K(\theta_n) = 0
  \quad\forall K\in \omega(\bx_n), \text{where } \kappa_K \rho_K \leq 1,
\end{equation}
and minimizing
\begin{equation}  
\label{eq:loceq2}
  %\min\limits_{\alpha^n_\gamma : \gamma \subset \partial K\setminus\GammaN, \gamma\ni\bx_n}
  \sum\limits_{K\in\omega(\bx_n) : \kappa_K \rho_K > 1} \varepsilon_K(\theta^*_n)^2.
%\quad\forall K\in \omega(\bx_n), \text{where } \kappa_K \rho_K > 1,
%K \subset \operatorname{supp} \theta_n
\end{equation}
% \begin{align}
% \label{eq:loceq1}
%   \sum\limits_{\gamma: \gamma \subset \partial K\setminus\GammaN, \gamma\ni\bx_n}
%   \sigma_{K,\gamma} \alpha^n_\gamma &= -D_K(\theta_n)
%   \quad\forall K\in \omega(\bx_n), \text{where } \kappa_K \rho_K \leq 1,
% \\ \label{eq:loceq2}
%   \sum\limits_{\gamma: \gamma \subset \partial K\setminus\GammaN, \gamma\ni\bx_n}
%   \sigma_{K,\gamma} \alpha^n_\gamma &\approx -D_K(\theta^*_n)
% \quad\forall K\in \omega(\bx_n), \text{where } \kappa_K \rho_K > 1,
% %K \subset \operatorname{supp} \theta_n
% \end{align}
Note that $\varepsilon_K$ in \ttg{eq:loceq1} and \ttg{eq:loceq2} is considered
in the form \ttg{eq:loceps}.

Observe that the system \ttg{eq:loceq1}--\ttg{eq:loceq2} is always solvable.
It was shown in \cite{AinBab:1999} and \cite{AinOde:2000} that
if $\kappa_K\rho_K \leq 1$ for all elements $K \in \omega(\bx_n)$ then
the linear system \ttg{eq:loceq1} is always solvable.
Trivially, removing some (or all) equality constraints
and replacing them by the requirement of least-squares fit \ttg{eq:loceq2},
preserves the solvability of the remaining system of linear
constraints \ttg{eq:loceq1}. In case the constrained least-squared problem \ttg{eq:loceq1}--\ttg{eq:loceq2} does not have a unique solution,
we consider the solution with the smallest $L^2$ norm.

To summarize, we require the satisfaction of the exact equilibration condition
\ttg{eq:loceq1} for those elements where the coefficient $\kappa_K$ is small.
For the other elements we
mimic the equilibration by the least-squares fit \ttg{eq:loceq2}.
The resulting constrained least-squares problem \ttg{eq:loceq1}--\ttg{eq:loceq2}
is always solvable and its solution depends continuously on the data.

\subsection{Auxiliary results}
In this section we recall several estimates from \cite{AinBab:1999}
and extend them to include the case of piecewise constant $\kappa$ and to Neumann
boundary conditions.
Let us note that assumptions \ttg{eq:kappacond1}--\ttg{eq:kappacond2} and consequently \ttg{eq:mincond}
make this extension straightforward.
Lemma~5(2) from \cite{AinBab:1999} says that
if $\gamma$ is an interior facet (i.e.\ shared by two elements)
then
\begin{equation}
\label{eq:jumpest}
\norm{ \left[ \frac{\partial u_h}{\partial\bn_K} \right] }_\gamma
  \leq C \left[
     \min\{h_\gamma,\kappa_K^{-1} \}^{-\frac12}
     \trinorm{u - u_h}_{\widetilde\gamma}
   +
     \min\{h_\gamma,\kappa_K^{-1} \}^{\frac12}
     \norm{f - \Pi f}_{\widetilde\gamma}
 \right],
\end{equation}
where
$\widetilde\gamma$ is the pair of elements sharing the facet $\gamma$
and $\Pi f$ is defined piecewise by $(\Pi f)|_K = \Pi_K f$ for
all $K\in\cT_h$.
Notice that thanks to the assumption \ttg{eq:mincond}
estimate \ttg{eq:jumpest} holds for any $K \in \widetilde\gamma$.

Further, Lemma~6 from \cite{AinBab:1999} provides the estimate
\begin{equation}
\label{eq:avgfluxest}
  \norm{g_K-\left\langle\frac{\partial u_h}{\partial\bn_K}\right\rangle}_{\gamma}
  \leq C \left[
    \min\{h_\gamma,\kappa_K^{-1} \}^{-\frac12}
    \trinorm{u - u_h}_{\widetilde K}
   +
    \min\{h_\gamma,\kappa_K^{-1} \}^{\frac12}
    \norm{f - \Pi f}_{\widetilde K}
 \right],
\end{equation}
for all $K\in \cT_h$, where $\gamma$ is a facet of $K$ that is either interior
or it lies on the Dirichlet boundary $\GammaD$.
The patch of elements $\widetilde K$ was defined in \ttg{eq:patchK}
and condition \ttg{eq:mincond} is needed to generalize the proof from \cite{AinBab:1999}
to the case of piecewise constant $\kappa$.
The final estimate on page 343 in \cite{AinBab:1999} states that
\begin{equation}
\label{eq:Pirest}
  \norm{\Pi_K r_h}_K
  \leq C \left[ 
   \min\{h_K,\kappa_K^{-1}\}
   \trinorm{u - u_h}_K + \norm{f - \Pi_K f}_K \right],
\end{equation}
for all elements $K$ in $\cT_h$.
Here, $r_h = f - \kappa_K^2 u_h + \Delta u_h$ stands for the residual on $K$.
This bound is local and independent of values of $\kappa$ in the other elements
and therefore applies to the case considered here.

We emphasize that estimates \ttg{eq:jumpest}--\ttg{eq:Pirest} are proved
in \cite{AinBab:1999} for the case of pure Dirichlet boundary conditions
and constant coefficient $\kappa$.
However, their proofs remain valid even in the presence of Neumann boundary
conditions and due to the condition \ttg{eq:mincond} also for piecewise
constant $\kappa$.
Nevertheless, estimate \ttg{eq:avgfluxest}
is not valid for a facet on the Neumann boundary.
We use a slight modification of the proof of Lemma~5(2)
from \cite{AinBab:1999} and derive the estimate
\begin{multline}
\label{eq:Neufluxes}
  \norm{g_K - \bnabla u_h|_K \cdot \bn_K}_{\gamma}
  \leq C \left[
    \min\{h_K,\kappa_K^{-1}\}^{-\frac12} 
    \trinorm{u - u_h}_K
\right.
\\ \left.
   +\min\{h_K,\kappa_K^{-1}\}^{\frac12} 
    \norm{f - \Pi_K f}_K
   + \norm{\gN - \Pi_\gamma \gN}_{\gamma}
 \right]
\end{multline}
for those facets $\gamma$ of $K$ located on the Neumann boundary $\GammaN$.
Thus, defining $R = g_K - \bnabla u_h|_K \cdot \bn_K$,
using \ttg{eq:jumpest}, \ttg{eq:avgfluxest}, \ttg{eq:Neufluxes}
and the fact that
$$
  R %= g_K - \bnabla u_h|_K \cdot \bn_K
= g_K - \left\langle\frac{\partial u_h}{\partial\bn_K}\right\rangle
   - \frac12 \left[ \frac{\partial u_h}{\partial\bn_K} \right]
$$
we easily derive the estimate
\begin{multline}
\label{eq:normRest}
 \norm{R}_{\partial K} \leq C \left(
     \min\{h_K,\kappa_K^{-1}\}^{-\frac12}
     \trinorm{u - u_h}_{\widetilde K}
\right.
\\ \left.
   +
     \min\{h_K,\kappa_K^{-1}\}^{\frac12}
     \norm{f - \Pi f}_{\widetilde K}
   + \norm{\gN - \Pi^K_\gamma \gN}_{\GammaN\cap\partial K}
\right)
\end{multline}
for all $K\in \cT_h$. In view of notation \ttg{eq:kappa0}
and thanks to the assumption \ttg{eq:kappacond2} the above estimates hold
even if $\kappa_K = 0$.

% \begin{equation}
% \label{eq:normRest}
%  \kappa^{-1/2} \norm{R}_{\partial K} \leq C \left(
%   \trinorm{u - u_h}_{\widetilde K} + \kappa^{-1} \norm{f - \Pi f}_{\widetilde K}
% \right).
% \end{equation}

\subsection{Flux reconstruction \#1}
For each simplex $K\in\cT_h$ on which $\kappa_K\rho_K\leq 1$,
we use a reconstruction of the form
\begin{equation}
\label{eq:tauK1}
  \btau_K^{(1)} = \bnabla u_h|_K + \btauKL + \btauKQ.
\end{equation}
The vector field $\btauKL$ is defined as
\begin{equation}
\label{eq:tauL}
  \btauKL = -\sum\limits_{n=1}^{d+1} \lambda_n
     \sum\limits_{\begin{subarray}{c} m=1\\ m\neq n \end{subarray}}^{d+1}
     R_{|\gamma_m}(\bfx_n) \; |\bnabla \lambda_m| \; \bt_{nm},
\end{equation}
where
% $$
%   \bc_n = %-\sum\limits_{m\in\cN(K), m\neq n}
%      -\sum\limits_{\begin{subarray}{c} m=1\\ m\neq n \end{subarray}}^{d+1}
%      R_{|\gamma_m}(\bfx_n) \; |\bnabla \lambda_m| \; \bt_{nm}
% $$
$\bfx_n$, $n=1,2,\dots,d+1$, stand for vertices of $K$,
$\gamma_n$ are the facets opposite to the vertices $\bfx_n$,
$\lambda_n$ are the corresponding barycentric coordinates,
$\bt_{mn} = \bfx_n - \bfx_m$ denote the edge-vectors from $\bfx_m$ to $\bfx_n$,
and function $R = g_K - \bnabla u_h|_K \cdot \bn_K$ is affine on each facet of $K$.
The quadratic vector field $\btauKQ$ is given by
\begin{equation}
\label{eq:tauQ}
  \btauKQ = \frac{1}{d+1} %\sum\limits_{m,n\in\cN(K), m > n}
   %\sum\limits_{\begin{subarray}{c} m,n=1\\ m > n \end{subarray}}^{d+1}
   \sum\limits_{n=1}^{d+1}
   \sum\limits_{\begin{subarray}{c} m=2\\ m > n \end{subarray}}^{d+1}
  \lambda_m \lambda_n \bt_{mn} \bt_{mn}^T \bnabla r(\overline\bfx_K)
\end{equation}
where $r = \Pi_K f - \kappa_K^2 u_h$ is affine on $K$,
and $\overline\bfx_K$ denotes the centroid of simplex $K$.

It can be easily shown that $\btauKL \cdot \bn_K = R$ 
and $\btauKQ \cdot \bn_K = 0$ on each facet of $K$.
Indeed, if we denote the outward normal unit vector to the facet $\gamma_k$
by $\bn_k$ then the following identity holds
\begin{align*}
  \btauKL \cdot \bn_k|_{\gamma_k}
%  = -\sum\limits_{n=1}^{d+1}
%      %\sum\limits_{m\in\cN(K), m\neq n}
%      \sum\limits_{\begin{subarray}{c} m=1\\ m\neq n \end{subarray}}^{d+1}
%      \lambda_n R_{|\gamma_m}(\bfx_n) \; |\bnabla \lambda_m| \; \bt_{nm}\cdot\bn_k
 &= %-\sum\limits_{n\in\cN(K), n\neq k}
    -\sum\limits_{\begin{subarray}{c} n=1\\ n\neq k \end{subarray}}^{d+1}
     \lambda_n R_{|\gamma_k}(\bfx_n) \; |\bnabla \lambda_k| \; \bt_{nk}\cdot\bn_k
\\
 &= \sum\limits_{\begin{subarray}{c} n=1\\ n\neq k \end{subarray}}^{d+1}
     \lambda_n R_{|\gamma_k}(\bfx_n) \; \bt_{nk}\cdot\bnabla\lambda_k
 = \sum\limits_{\begin{subarray}{c} n=1\\ n\neq k \end{subarray}}^{d+1}
     \lambda_n R_{|\gamma_k}(\bfx_n)
 = R_{|\gamma_k},
\end{align*}
where we use the facts that: $\lambda_k|_{\gamma_k} = 0$;
if $n\neq k$ then $\bt_{nm}\cdot\bn_k = 0$ for $m\neq k$;
%$\bt_{mn}\cdot\bn_k$ is nonzero for $m=k$ and $n=k$, only,
$\bn_k = -\bnabla \lambda_k/|\bnabla \lambda_k|$;
and $\bt_{nk} \cdot \bnabla\lambda_k = 1$.
Similarly, we show that
$$
 \btauKQ \cdot \bn_k|_{\gamma_k}
= \frac{1}{d+1} %\sum\limits_{m,n\in\cN(K), m>n}
  %\sum\limits_{\begin{subarray}{c} m,n=1\\ m > n \end{subarray}}^{d+1}
   \sum\limits_{n=1}^{d+1}
   \sum\limits_{\begin{subarray}{c} m=2\\ m > n \end{subarray}}^{d+1}
  \lambda_m|_{\gamma_k} \lambda_n|_{\gamma_k} (\bt_{mn} \cdot \bnabla r(\overline\bfx_K)) (\bt_{mn} \cdot \bn_k)
= 0,
$$
because
if $n=k$ then $\lambda_n|_{\gamma_k} = 0$ and
if $n\neq k$ then $m\neq k$ and $\bt_{mn} \cdot \bn_k = 0$.

%Thus, we verified that 

%In addition the flux $\btau_K^{(1)}$ is constructed in such a way that
%the first term appearing in the definition \ttg{eq:etaK} of $\eta_K$
%vanishes. We formulate this statement as the following lemma.

\begin{lemma}\label{le:divtau1}
Let $K \in \cT_h$ then the vector field $\btau_K^{(1)}$ defined by
\ttg{eq:tauK1}, \ttg{eq:tauL}, and \ttg{eq:tauQ} satisfies
$\btau_K^{(1)} \cdot \bn_K = g_K$ on all facets of $K$
and,
if $\kappa_K \rho_K \leq 1$, then 
$$
  \Pi_K f - \kappa_K^2u_h + \ddiv \btau_K^{(1)} = 0
  \quad\text{in }K.
$$
\end{lemma}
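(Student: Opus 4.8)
The plan is to treat the two assertions separately: the boundary identity is essentially a reassembly of the facet computations already displayed before the lemma, while the divergence identity is the substantive part and is where the hypothesis $\kappa_K\rho_K\leq 1$ is used.

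For the boundary condition I would simply collect the three contributions to $\btau_K^{(1)}\cdot\bn_k$ on a facet $\gamma_k$. The discussion preceding the lemma already records that $\btauKL\cdot\bn_k = R$ and $\btauKQ\cdot\bn_k = 0$ on $\gamma_k$; since $\bnabla u_h|_K$ is constant on $K$, its normal component equals $\bnabla u_h|_K\cdot\bn_K$, and because $R = g_K - \bnabla u_h|_K\cdot\bn_K$ by definition, the three terms combine to $\btau_K^{(1)}\cdot\bn_k = \bnabla u_h|_K\cdot\bn_K + R + 0 = g_K$ on every facet. This part needs no hypothesis on $\kappa_K\rho_K$.

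For the divergence identity I would first note that $u_h$ is affine on $K$, so $\ddiv\bnabla u_h|_K = 0$ and the claim reduces to $\ddiv(\btauKL + \btauKQ) = -r$, where $r = \Pi_K f - \kappa_K^2 u_h$ is affine on $K$. The heart of the argument is the explicit evaluation of $\ddiv\btauKQ$. Using the barycentric identities $\bnabla\lambda_i\cdot(\bfx_j-\bfx_k) = \delta_{ij}-\delta_{ik}$, I would compute $\ddiv(\lambda_m\lambda_n\bt_{mn}) = \lambda_m - \lambda_n$ and, since $r$ is affine, $\bt_{mn}\cdot\bnabla r = r(\bfx_n) - r(\bfx_m)$. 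Substituting these into \ttg{eq:tauQ} and summing over the $\binom{d+1}{2}$ index pairs, the double sum collapses after invoking the affine interpolation identity $\sum_n r(\bfx_n)\lambda_n = r$ together with the centroid formula $r(\overline\bfx_K) = (d+1)^{-1}\sum_n r(\bfx_n)$, yielding $\ddiv\btauKQ = r(\overline\bfx_K) - r$. This purely algebraic bookkeeping over the index pairs is where I expect the only real effort to lie.

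It then remains to fix the constant contributed by $\btauKL$. Since $\btauKL$ is linear in the barycentric coordinates, $\ddiv\btauKL$ is constant on $K$, so $\ddiv\btau_K^{(1)} = \ddiv\btauKL + r(\overline\bfx_K) - r$ is an affine function whose nonconstant part already coincides with that of $-r$; only the constant needs matching. To do so I would apply the divergence theorem to $\btau_K^{(1)}$ and use the boundary identity just proved, giving $\int_K\ddiv\btau_K^{(1)}\dx = \int_{\partial K}g_K\ds$. Here the hypothesis $\kappa_K\rho_K\leq 1$ enters: it guarantees the exact equilibration \ttg{eq:loceq1}, equivalently \ttg{eq:gKequilib}, which with $\theta = 1$ yields $\int_{\partial K}g_K\ds = \int_K(\kappa_K^2 u_h - f)\dx = -\int_K r\dx$, using $\int_K\Pi_K f\dx = \int_K f\dx$. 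Since $\int_K(r(\overline\bfx_K)-r)\dx = 0$, this forces $\ddiv\btauKL = -r(\overline\bfx_K)$, whence $\ddiv\btau_K^{(1)} = -r$, i.e. $\Pi_K f - \kappa_K^2 u_h + \ddiv\btau_K^{(1)} = 0$, as required.
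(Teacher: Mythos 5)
Your proof is correct and follows essentially the same route as the paper's: both arguments rest on the explicit evaluation $\ddiv\btauKQ = r(\overline\bfx_K) - r$, the constancy of $\ddiv\btauKL$, and the exact equilibration \ttg{eq:equilib1} tested against the constant function $1 = \sum_n \theta_n$ to supply $\int_{\partial K} g_K \ds = -\int_K r \dx$. The only differences are organizational — the paper pins down $\ddiv\btauKL$ first by applying the divergence theorem to $\btauKL$ alone and collapses the double sum via the vector identity $\sum_{n}\sum_{m>n}(\lambda_m-\lambda_n)(\bfx_n-\bfx_m)=-(d+1)(\bfx-\overline\bfx_K)$, while you fix the constant last using the divergence theorem on the full field and collapse the sum through the vertex values $r(\bfx_n)$ — which is equivalent bookkeeping.
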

\begin{proof}
The first assertion is a consequence of the foregoing arguments.
Suppose $\kappa_K \rho_K \leq 1$, then
since $\btauKL$ has constant divergence over the element $K$
and $\bnabla u_h|_K$ has vanishing divergence over $K$, we
have
$$
\ddiv \btauKL = \frac{1}{|K|} \int_{\partial K} \btauKL\cdot\bn_K \ds
 = \frac{1}{|K|} \int_{\partial K} g_K \ds.
$$
%The fact \ttg{eq:shaperegrhoK} and assumption $\kappa_K\rho_K \leq 1/\cC_0$
Since $\kappa_K \rho_K \leq 1$, the exact equilibration condition
\ttg{eq:equilib1} is satisfied in $K$ for $n=1,2,\dots,d+1$ and,
consequently, using the fact that $\sum_{n=1}^{d+1} \theta_n = 1$ on $K$
in \ttg{eq:equilib1} we end up with equality
$$
  \int_{\partial K} g_K \ds = -\int_K (f - \kappa_K^2 u_h) \dx
  = -\int_K (\Pi_K f - \kappa_K^2 u_h) \dx.
$$
Observing that $\ddiv (\lambda_m \lambda_n \bt_{mn}) = \lambda_m - \lambda_n$
and that
$$
 %\sum\limits_{m,n\in\cN(K), m>n}
 %\sum\limits_{\begin{subarray}{c} m,n=1\\ m > n \end{subarray}}^{d+1}
 \sum\limits_{n=1}^{d+1}
 \sum\limits_{\begin{subarray}{c} m=2\\ m > n \end{subarray}}^{d+1}
 \left(\lambda_m(\bfx) - \lambda_n(\bfx)\right)(\bfx_n - \bfx_m)
 = -(d+1)(\bfx - \overline\bfx_K),
$$
where $\bfx = \sum_{m=1}^{d+1} \bfx_m \lambda_m(\bfx)$ and
$\overline\bfx_K = \sum_{m=1}^{d+1} \bfx_m / (d+1)$,
we can compute the divergence of $\btauKQ$ as
\begin{multline*}
  \ddiv\btauKQ(\bfx)
% &= \frac{1}{d+1} %\sum\limits_{m,n\in\cN(K), m>n}
%    \sum\limits_{\begin{subarray}{c} m,n=1\\ m > n \end{subarray}}^{d+1}
%   \ddiv(\lambda_m \lambda_n \bt_{mn})
%   \bt_{mn} \cdot \bnabla r(\overline\bfx_K))
% \\
= \frac{1}{d+1} %\sum\limits_{m,n\in\cN(K), m>n}
  %\sum\limits_{\begin{subarray}{c} m,n=1\\ m > n \end{subarray}}^{d+1}
  \sum\limits_{n=1}^{d+1}
  \sum\limits_{\begin{subarray}{c} m=2\\ m > n \end{subarray}}^{d+1}
  (\lambda_m(\bfx) - \lambda_n(\bfx))(\bfx_n - \bfx_m)
  \cdot \bnabla r(\overline\bfx_K)
\\
= \left(\overline\bfx_K - \bfx \right)\cdot \bnabla r(\overline\bfx_K).
\end{multline*}
Using the fact that $r = \Pi_K f - \kappa_K^2 u_h$ is affine
and the centroid quadrature
rule for simplices that is exact for all linear functions, we obtain
$$
  \ddiv\btauKQ(\bfx) = - r(\bfx) + r(\overline\bfx_K)
  = - r(\bfx) + \frac{1}{|K|}\int_K r \dx.
$$
The statement of the lemma follows by summing the above equations.
\end{proof}

The next result shows that $\btau_K^{(1)}$ gives an efficient estimate
of the local error in element $K$:

\begin{lemma}\label{le:tauK1}
%Let $\kappa_K h_K \leq 1$ ???
If $K\in\cT_h$ is such that $\kappa_K \rho_K \leq 1$
then
$$
  \eta_K\bigl(\btau_K^{(1)}\bigr) \leq C \left(
    \trinorm{u-u_h}_{\widetilde K}
  + h_K \norm{f-\Pi f}_{\widetilde K}
  + h_K^{1/2} \norm{\gN - \Pi^K_\gamma \gN}_{\GammaN\cap\partial K}
  \right).
$$
% where the final term is defined by the relation
% $$
%   \norm{f-\Pi f}_{\widetilde K}^2
% = \sum\limits_{K \subset \widetilde K} \norm{f-\Pi_K f}_K^2.
% $$
\end{lemma}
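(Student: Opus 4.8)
The plan is to exploit the key consequence of Lemma~\ref{le:divtau1}: since $\kappa_K\rho_K\le 1$, the volumetric residual vanishes, $\Pi_K f-\kappa_K^2u_h+\ddiv\btau_K^{(1)}=0$ in $K$, so the second contribution to $\eta_K^2$ in \ttg{eq:etaK} is zero (in both the $\kappa_K>0$ and $\kappa_K=0$ cases). Hence by \ttg{eq:tauK1} the estimator collapses to $\eta_K(\btau_K^{(1)})=\norm{\btau_K^{(1)}-\bnabla u_h}_K=\norm{\btauKL+\btauKQ}_K\le\norm{\btauKL}_K+\norm{\btauKQ}_K$, and it remains to bound the two pieces separately. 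I would first record that the hypothesis $\kappa_K\rho_K\le1$ together with shape regularity \ttg{eq:shaperegrhoK} forces $\kappa_K^{-1}\ge\rho_K\ge c\,h_K$, so that $\min\{h_K,\kappa_K^{-1}\}$ is comparable to $h_K$ (using \ttg{eq:kappa0} when $\kappa_K=0$); this is what converts the $\min\{h_K,\kappa_K^{-1}\}$-weighted right-hand sides of \ttg{eq:normRest} and \ttg{eq:Pirest} into the $h_K$-weighted bound claimed here.

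For the linear part, the strategy is a term-by-term scaling estimate of \ttg{eq:tauL}. Since $R$ is affine on each facet $\gamma_m$ and $\bfx_n\in\gamma_m$ whenever $n\neq m$, a finite-dimensional norm equivalence on $\gamma_m$ gives $|R_{|\gamma_m}(\bfx_n)|\le C|\gamma_m|^{-1/2}\norm{R}_{\gamma_m}\le C h_K^{-(d-1)/2}\norm{R}_{\gamma_m}$. Combining this with the elementary bounds $\norm{\lambda_n}_K\le C h_K^{d/2}$, $|\bnabla\lambda_m|\le C h_K^{-1}$ and $|\bt_{nm}|\le h_K$ (all following from shape regularity) and summing the finitely many terms yields $\norm{\btauKL}_K\le C h_K^{1/2}\norm{R}_{\partial K}$. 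Inserting \ttg{eq:normRest} and using $\min\{h_K,\kappa_K^{-1}\}\sim h_K$ then produces exactly the three terms $\trinorm{u-u_h}_{\widetilde K}$, $h_K\norm{f-\Pi f}_{\widetilde K}$ and $h_K^{1/2}\norm{\gN-\Pi^K_\gamma\gN}_{\GammaN\cap\partial K}$.

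For the quadratic part I would observe that $r=\Pi_K f-\kappa_K^2u_h$ is affine on $K$ and, because $u_h$ is affine there ($\Delta u_h=0$ on $K$), in fact $r=\Pi_K r_h$, so \ttg{eq:Pirest} applies. Since $\bnabla r$ is constant, an inverse estimate for affine functions on a shape-regular simplex gives $|\bnabla r|\le C h_K^{-(d/2+1)}\norm{r}_K$ (subtracting the mean only helps), and bounding $\norm{\lambda_m\lambda_n}_K\le|K|^{1/2}\le Ch_K^{d/2}$ and $|\bt_{mn}|^2\le h_K^2$ in \ttg{eq:tauQ} yields $\norm{\btauKQ}_K\le C h_K\norm{\Pi_K r_h}_K$. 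Feeding in \ttg{eq:Pirest} with $\min\{h_K,\kappa_K^{-1}\}\sim h_K$ gives $\norm{\btauKQ}_K\le C(h_K^2\trinorm{u-u_h}_K+h_K\norm{f-\Pi_K f}_K)$, and since $h_K$ is bounded by $\operatorname{diam}\Omega$ the first term is absorbed into $C\trinorm{u-u_h}_{\widetilde K}$. Adding the two estimates gives the assertion.

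The routine calculations are the dimensional scaling exponents; the step that needs care is establishing $\min\{h_K,\kappa_K^{-1}\}\sim h_K$ from $\kappa_K\rho_K\le1$, as this is precisely where the local regime assumption is used to replace the robust weight by a pure power of $h_K$, together with the inverse estimate $|\bnabla r|\le C h_K^{-(d/2+1)}\norm{r}_K$, whose constant must be traced to the shape-regularity constant so that it does not secretly depend on $\kappa$ or the mesh-size.
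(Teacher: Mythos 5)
Your proposal is correct and follows essentially the same route as the paper's proof: Lemma~\ref{le:divtau1} eliminates the volumetric residual term, scaling arguments yield $\norm{\btauKL}_K \leq C h_K^{1/2}\norm{R}_{\partial K}$ and $\norm{\btauKQ}_K \leq C h_K \norm{\Pi_K r_h}_K$, and then \ttg{eq:normRest} and \ttg{eq:Pirest} are inserted using the equivalence $C h_K \leq \min\{h_K,\kappa_K^{-1}\} \leq h_K$ forced by $\kappa_K\rho_K\leq 1$ and shape regularity. The only cosmetic differences are that the paper bounds the linear part by grouping it into coefficient vectors $\bc_n$ (using $d|K|\,|\bnabla\lambda_m|=|\gamma_m|$) rather than term-by-term as you do, and that it passes from \ttg{eq:Pirest} to the bound $\norm{\btauKQ}_K \leq C\left(\trinorm{u-u_h}_K + h_K\norm{f-\Pi f}_K\right)$ directly, without your (harmless) absorption of an $h_K^2$ factor via the domain diameter.
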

\begin{proof}
Let
$$
   \bc_n = \sum\limits_{\begin{subarray}{c} m=1\\ m\neq n \end{subarray}}^{d+1}
     R_{|\gamma_m}(\bfx_n) \; |\bnabla \lambda_m| \; \bt_{nm},
$$
then $\btauKL = -\sum_{n=1}^{d+1} \lambda_n \bc_n$ and we have
$$
  |\bc_n| \leq \sum\limits_{\begin{subarray}{c} m=1\\ m\neq n \end{subarray}}^{d+1}
     |R_{|\gamma_m}(\bfx_n)| \; |\bnabla \lambda_m| \; |\bt_{nm}|
\leq C \sum\limits_{\begin{subarray}{c} m=1\\ m\neq n \end{subarray}}^{d+1}
   |R_{|\gamma_m}(\bfx_n)|,
$$
because $d |K| |\bnabla\lambda_m| = |\gamma_m|$, $|\bt_{nm}| \leq h_K$,
and due to the shape regularity assumption \ttg{eq:shapereg}.
Further, thanks to the linearity of $R$,
$$
  \sum\limits_{\begin{subarray}{c} n=1\\ n\neq m \end{subarray}}^{d+1}
  | R_{|\gamma_m}(\bfx_n) |^2 \leq C \frac{1}{|\gamma_m|} \norm{R}_{\gamma_m}^2.
$$
We utilize these results to bound
\begin{equation}
\label{eq:tauKLest1}
\norm{\btauKL}_K^2 \leq C |K| \sum\limits_{n=1}^{d+1} |\bc_n|^2
\leq C |K| \sum\limits_{m=1}^{d+1}
     \sum\limits_{\begin{subarray}{c} n=1\\ n\neq m \end{subarray}}^{d+1}
     |R_{|\gamma_m}(\bfx_n)|^2
\leq C h_K \norm{ R }_{\partial K}^2.
\end{equation}

Similarly, we have
$$
  \norm{\btauKQ}_K^2 \leq C \sum\limits_{n=1}^{d+1}
   \sum\limits_{\begin{subarray}{c} m=2\\ m > n \end{subarray}}^{d+1}
  \int\limits_K \lambda_m^2 \lambda_n^2 \dx |\bt_{mn}|^4 |\bnabla r|^2
\leq C h_K^4 |K|\; |\bnabla r|^2
\leq C h_K^4 \norm{ \bnabla r }_K^2,
$$
where we used the fact that $\bnabla r$ is constant over $K$.
Since $r \in \mathbb{P}^1(K)$, we have the inverse inequality
$\norm{\bnabla r}_K \leq C h_K^{-1} \norm{r}_K$ and we obtain
\begin{equation}
\label{eq:tauKQest1}
  \norm{\btauKQ}_K \leq C h_K \norm{r}_K = C h_K \norm{\Pi_K r_h}_K,
\end{equation}
because $r_h = f - \kappa_K^2 u_h + \Delta u_h$ and
$\Pi_K r_h = r$ on $K$.

Finally, using estimate \ttg{eq:normRest} in \ttg{eq:tauKLest1}
and estimate \ttg{eq:Pirest} in \ttg{eq:tauKQest1}, we derive
\begin{align}
\label{eq:tauKLest2}
  \norm{\btauKL}_K &\leq C \left[
    \trinorm{u - u_h}_{\widetilde K} + h_K \norm{f - \Pi f}_{\widetilde K}
   + h_K^{1/2} \norm{\gN - \Pi^K_\gamma \gN }_{\GammaN\cap\partial K}
 \right],
\\
\label{eq:tauKQest2}
  \norm{\btauKQ}_K &\leq C \left[
    \trinorm{u - u_h}_K + h_K \norm{f - \Pi f}_K
 \right].
\end{align}
Notice that the assumption
$\kappa_K\rho_K \leq 1$ and the shape regularity \ttg{eq:shapereg}
imply the existence of a constant $C>0$ such that
$C h_K \leq \min\{h_K,\kappa_K^{-1}\} \leq h_K$.
Due to Lemma~\ref{le:divtau1} and definition \ttg{eq:etaK} we have
$$
  \eta_K(\btau_K^{(1)}) = \norm{\btau_K^{(1)} - \bnabla u_h}_K
  = \norm{\btauKL + \btauKQ}_K
\leq \norm{\btauKL}_K + \norm{\btauKQ}_K
$$
and estimates \ttg{eq:tauKLest2}--\ttg{eq:tauKQest2} finish the proof.
\end{proof}

\subsection{Flux reconstruction \#2}
On elements $K$ for which $\kappa_K\rho_K > 1$, we use  
a flux reconstruction given by
\begin{equation}
\label{eq:tauK2}
  \btau_K^{(2)} = \bnabla u_h|_K + \btauKO,
\end{equation}
where the vector field $\btauKO$ is defined piecewise on each element $K$ in
the following way.
We consider $d+1$ subsimplices $K_\gamma$ of $K$ that are
defined as convex hulls of the incentre $\bfx_K$ and facets $\gamma$ of $K$.
In each subsimplex $K_\gamma$ we define $\btauKO$ to be
\begin{equation*}
%\label{eq:tauKO}
%\btauKO = \rho_K^{-1} \mathrm{e}^{-\kappa_K x_d} (\bfx - \bfx_K) R(x_1,\dots,x_{d-1})
\btauKO(\bfx) = \rho_K^{-1} (1-\kappa_K x_d)^+ (\bfx - \bfx_K) R(x_1,\dots,x_{d-1})
\quad\text{in }K_\gamma,
\end{equation*}
where $z^+ = (|z|+z)/2$ stands for the positive part of $z$,
$R = g_K - \bnabla u_h|_K \cdot \bn_K$ as before,
$\rho_K$ is the inradius of $K$, and
$\bfx = (x_1,x_2, \dots, x_d)$ are local Cartesian coordinates defined in such
a way that points $(x_1,\dots,x_{d-1})$ lie in the plane of $\gamma$ and
$x_d$ corresponds to the direction perpendicular to $\gamma$ aiming inwards $K$,
see Figure~\ref{fi:Kgamma} for a three-dimensional illustration.

\begin{figure}
\begin{center}
%\newlength{\ww}
\setlength{\ww}{1mm}
\includegraphics[width=60\ww]{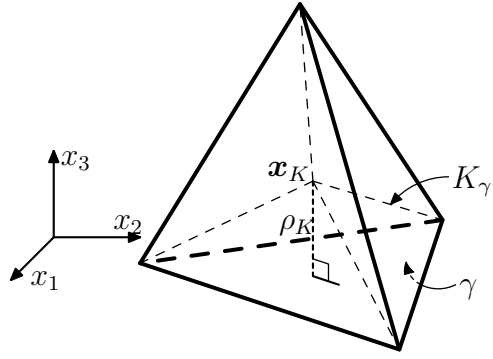}%
\makebox[0pt][l]{\hspace{-57\ww}\raisebox{9\ww}[0pt][0pt]{$x_1$}}%
\makebox[0pt][l]{\hspace{-46\ww}\raisebox{16.5\ww}[0pt][0pt]{$x_2$}}%
\makebox[0pt][l]{\hspace{-53\ww}\raisebox{25\ww}[0pt][0pt]{$x_3$}}%
\makebox[0pt][l]{\hspace{-1\ww}\raisebox{22\ww}[0pt][0pt]{$K_\gamma$}}%
\makebox[0pt][l]{\hspace{0\ww}\raisebox{8\ww}[0pt][0pt]{$\gamma$}}%
\makebox[0pt][l]{\hspace{-25.5\ww}\raisebox{23.5\ww}[0pt][0pt]{$\bfx_K$}}%
\makebox[0pt][l]{\hspace{-24\ww}\raisebox{16.5\ww}[0pt][0pt]{$\rho_K$}}%
\end{center}
\caption{\label{fi:Kgamma}
Division of $K$ into subsimplices $K_\gamma$ and the local Cartesian coordinates.
}
\end{figure}

Clearly, $\btauKO$ vanishes for $x_d \geq \kappa_K^{-1}$.
We also observe that the normal component of $\btauKO$ vanishes on
$\partial K_\gamma \setminus \gamma$ whilst on $\gamma$
$$
  \btauKO \cdot \bn_K |_\gamma
%= \rho_K^{-1} R(x_1,\dots,x_{d-1}) (\bfx - \bfx_K) \cdot \bn_K |_\gamma
%= \rho_K^{-1} R(x_1,\dots,x_{d-1}) \rho_K
= R(x_1,\dots,x_{d-1}),
$$
because $x_d = 0$ on $\gamma$ and $(\bfx - \bfx_K) \cdot \bn_K = \rho_K$
on $\gamma$.
These conditions guarantee that $\btau_K^{(2)} \in \Hdiv[K]$
and $\btau_K^{(2)} \cdot \bn_K = g_K$ on $\partial K$.
Moreover, the flux leads to a locally efficient estimator for the error
in the case $\kappa_K\rho_K > 1$:

\begin{lemma}\label{le:tauK2}
Let $K\in\cT_h$ then
$\btau_K^{(2)} \cdot \bn_K = g_K$ on $\partial K$
and, if $\kappa_K\rho_K > 1$, then
$$
  \eta_K\bigl(\btau_K^{(2)}\bigr) \leq C \left(
    \trinorm{u-u_h}_{\widetilde K}
  + \kappa_K^{-1} \norm{f-\Pi f}_{\widetilde K}
  + \kappa_K^{-1/2} \norm{\gN-\Pi^K_\gamma \gN}_{\GammaN\cap\partial K}
\right).
$$
\end{lemma}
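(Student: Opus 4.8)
The first assertion, $\btau_K^{(2)}\cdot\bn_K = g_K$ on $\partial K$, was already established in the discussion preceding the lemma (the normal trace of $\btauKO$ vanishes on $\partial K_\gamma\setminus\gamma$ while $\btauKO\cdot\bn_K|_\gamma = R$). That same vanishing normal trace on the interior facets $\partial K_\gamma\setminus\gamma$ also guarantees $\btauKO\in\Hdiv[K]$, so that $\ddiv\btauKO$ may be computed subsimplex-by-subsimplex with no spurious surface contributions. Assume now $\kappa_K\rho_K>1$. Since $\Omega$ is a fixed polytope, $\rho_K\le h_K\le\operatorname{diam}\Omega$, whence $\kappa_K^{-1}<\rho_K\le h_K$ and therefore $\min\{h_K,\kappa_K^{-1}\}=\kappa_K^{-1}$. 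Writing $r=\Pi_K f-\kappa_K^2 u_h=\Pi_K r_h$ and noting that $\btau_K^{(2)}-\bnabla u_h=\btauKO$ and $\ddiv\btau_K^{(2)}=\ddiv\btauKO$ (as $\ddiv\bnabla u_h=0$), definition \ttg{eq:etaK} reduces the task to bounding $\norm{\btauKO}_K$ and $\kappa_K^{-1}\norm{r+\ddiv\btauKO}_K$.

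The plan for these interior estimates hinges on the fact that $\btauKO$ is supported in the boundary layer $\{0\le x_d\le\kappa_K^{-1}\}$ of width $\kappa_K^{-1}$, on which $(1-\kappa_K x_d)^+$ contributes the one-dimensional weight $\int_0^{\kappa_K^{-1}}(1-\kappa_K x_d)^2\,dx_d=1/(3\kappa_K)$. On each subsimplex $K_\gamma$ I would use $|\bfx-\bfx_K|\le h_K$ and shape regularity \ttg{eq:shapereg} to obtain $|\btauKO|\le C(1-\kappa_K x_d)^+|R|$, and then, since $R$ depends only on the tangential coordinates and is affine on $\gamma$ (so that $\int_{\gamma(x_d)}R^2\,d\sigma\le C\norm R_\gamma^2$ on every slice $\gamma(x_d)$ by equivalence of norms on the finite-dimensional space of affine functions on $\gamma$), integrate first over the slice and then in $x_d$ to get $\norm{\btauKO}_{K_\gamma}^2\le C\kappa_K^{-1}\norm R_\gamma^2$; summing over the facets gives $\norm{\btauKO}_K\le C\kappa_K^{-1/2}\norm R_{\partial K}$. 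For the divergence I would apply $\ddiv(f\,(\bfx-\bfx_K))=d\,f+(\bfx-\bfx_K)\cdot\bnabla f$ with $f=\rho_K^{-1}(1-\kappa_K x_d)^+R$; the three resulting terms scale like $\rho_K^{-1}|R|$, $\rho_K^{-1}h_K|\bnabla_{\mathrm{tang}}R|$, and $\rho_K^{-1}\kappa_K|x_d-\rho_K|\,|R|$, the last being dominant because $|x_d-\rho_K|\le\rho_K$ and $\kappa_K\rho_K>1$. Using the inverse inequality $\norm{\bnabla_{\mathrm{tang}}R}_\gamma\le Ch_K^{-1}\norm R_\gamma$ on the middle term together with the same layer integration throughout yields $\norm{\ddiv\btauKO}_K\le C\kappa_K^{1/2}\norm R_{\partial K}$.

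It then remains to control $\kappa_K^{-1}\norm r_K$. By \ttg{eq:Pirest} and $\min\{h_K,\kappa_K^{-1}\}=\kappa_K^{-1}$ one has $\norm r_K=\norm{\Pi_K r_h}_K\le C(\kappa_K^{-1}\trinorm{u-u_h}_K+\norm{f-\Pi_K f}_K)$, so $\kappa_K^{-1}\norm r_K\le C(\kappa_K^{-2}\trinorm{u-u_h}_K+\kappa_K^{-1}\norm{f-\Pi f}_K)$; here the factor $\kappa_K^{-2}$ is harmless because $\kappa_K^{-1}<\rho_K\le\operatorname{diam}\Omega$ gives $\kappa_K^{-2}\le C$. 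Collecting the three estimates produces $\eta_K(\btau_K^{(2)})\le C(\kappa_K^{-1/2}\norm R_{\partial K}+\trinorm{u-u_h}_K+\kappa_K^{-1}\norm{f-\Pi f}_K)$, and inserting \ttg{eq:normRest} for $\norm R_{\partial K}$ (again with $\min\{h_K,\kappa_K^{-1}\}=\kappa_K^{-1}$, so that $\kappa_K^{-1/2}$ times the three terms of \ttg{eq:normRest} produces exactly $\trinorm{u-u_h}_{\widetilde K}$, $\kappa_K^{-1}\norm{f-\Pi f}_{\widetilde K}$, and $\kappa_K^{-1/2}\norm{\gN-\Pi_\gamma^K\gN}_{\GammaN\cap\partial K}$) yields the asserted bound. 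The main obstacle is the explicit computation and estimation of $\ddiv\btauKO$ on the subsimplices $K_\gamma$: one must track the powers of $\kappa_K$ and $\rho_K$ carefully through the boundary-layer integration and verify that the dominant $\kappa_K^{1/2}\norm R_{\partial K}$ contribution survives multiplication by the $\kappa_K^{-1}$ weight in \ttg{eq:etaK}, matching the order of the flux term.
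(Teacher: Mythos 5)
Your proposal is correct and follows essentially the same route as the paper's proof: the boundary-layer integration $\int_0^{\kappa_K^{-1}}(1-\kappa_K x_d)^2\dx[x_d]=1/(3\kappa_K)$ giving $\norm{\btauKO}_K \leq C\kappa_K^{-1/2}\norm{R}_{\partial K}$, the product-rule divergence computation with the inverse inequality on $R$ giving $\norm{\ddiv\btauKO}_K \leq C\kappa_K^{1/2}\norm{R}_{\partial K}$, and the final combination via \ttg{eq:Pirest} and \ttg{eq:normRest} with $\min\{h_K,\kappa_K^{-1}\}=\kappa_K^{-1}$. The only deviation is your domain-diameter patch $\kappa_K^{-2}\leq C$ for the term $\kappa_K^{-1}\norm{\Pi_K r_h}_K$, which is a legitimate (if dimensionally awkward) fix for the exponent as printed in \ttg{eq:Pirest}; with the exponent $\min\{h_K,\kappa_K^{-1}\}^{-1}$, as in the cited source, the chaining yields $\trinorm{u-u_h}_K$ directly, which is how the paper uses it.
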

\begin{proof}
The first assertion has been shown above. Suppose $\kappa_K\rho_K > 1$, then
since $|\bfx - \bfx_K| \leq h_K$ and $h_K/\rho_K$ is bounded uniformly
thanks to \ttg{eq:shapereg}, we obtain
\begin{equation}
\label{eq:L2normtauKO}
\norm{\btauKO}_{K_\gamma}^2
\leq \frac{h_K^2}{\rho_K^2}
    %\int_0^\infty \mathrm{e}^{-2\kappa_K x_d} \dx[x_d]
    \int_0^{\kappa_K^{-1}} (1-\kappa_K x_d)^2 \dx[x_d]
    %\int_\gamma R(x_1,\dots,x_{d-1})^2 \dx[x_1]\dots\dx[x_{d-1}]
    \norm{R}_\gamma^2
= \frac{h_K^2}{\rho_K^2} \frac{1}{3\kappa_K} \norm{R}_\gamma^2
\leq C \kappa_K^{-1} \norm{R}_\gamma^2.
\end{equation}
For $x_d \leq \kappa_K^{-1}$, a simple computation yields inequality
$$
  |\ddiv \btauKO| \leq
  %\rho_K^{-1} \mathrm{e}^{-\kappa_K x_d}
  %\left[ (d + \kappa_K h_K) |R| + h_K |\bnabla_\gamma R| \right]
  \rho_K^{-1} (1-\kappa_K x_d)^+
  \left( d |R| + h_K |\bnabla_\gamma R| \right) + \rho_K^{-1} \kappa_K h_K |R|,
  %\quad \text{for } x_d \leq \kappa_K^{-1},
$$
where $\bnabla_\gamma$ denotes the gradient with respect to $x_1,\dots,x_{d-1}$
only.
Consequently,
$$
  \norm{\ddiv \btauKO}_{K_\gamma}^2
% \leq \frac{C}{\rho_K^2} \int_0^\infty \mathrm{e}^{-2\kappa_K x_d} \dx[x_d]
%    \left[ (1+ \kappa_K^2 h_K^2) \norm{R}_\gamma^2
%           + h_K^2 \norm{\bnabla_\gamma R}_\gamma \right].
\leq \frac{C}{\rho_K^2} \left( \int_0^{\kappa_K^{-1}} (1-\kappa_K x_d)^2 \dx[x_d]
   \left[ \norm{R}_\gamma^2 + h_K^2 \norm{\bnabla_\gamma R}_\gamma^2 \right]
    + \kappa_K h_K^2 \norm{R}_\gamma^2
    \right).
$$
Since $R \in \mathbb{P}^1(\gamma)$, we use the shape regularity
and the inverse estimate
$\norm{\bnabla_\gamma R}_\gamma \leq C h_\gamma^{-1} \norm{R}_\gamma$
to derive
\begin{equation}
\label{eq:normdivtauKO}
  \norm{\ddiv \btauKO}_{K_\gamma}^2 \leq
\frac{C}{\rho_K^2} \frac{1}{\kappa_K} \max\{1,\kappa_K h_K \}^2 \norm{R}_\gamma^2
\leq C \kappa_K \norm{R}_\gamma^2,
\end{equation}
where the last inequality follows from
the shape regularity \ttg{eq:shapereg}, from \ttg{eq:shaperegrhoK}, and from
the assumption $\kappa_K\rho_K > 1$.

% this estimate simplifies to
% \begin{equation}
% \label{eq:normdivtauKO}
%   \norm{\ddiv \btauKO}_{K_\gamma}^2 \leq C \kappa_K \norm{R}_\gamma^2.
% \end{equation}

%Thanks to this assumption, we also have $M_{h_K,\kappa_K} = \kappa_K^{-1}$.

Hence, thanks to \ttg{eq:normRest} and \ttg{eq:L2normtauKO}:
\begin{multline*}
  \norm{\btau_K^{(2)} - \bnabla u_h}_K
= \norm{\btauKO}_K
\leq C \kappa_K^{-1/2} \norm{R}_{\partial K}
\\
\leq C \left(
  \trinorm{u - u_h}_{\widetilde K}
+ \kappa_K^{-1} \norm{f - \Pi f}_{\widetilde K}
+ \kappa_K^{-1/2} \norm{\gN - \Pi^K_\gamma \gN}_{\GammaN\cap\partial K}
\right).
\end{multline*}
Similarly, estimates \ttg{eq:normdivtauKO}, \ttg{eq:Pirest}, and \ttg{eq:normRest}
yield the bound
\begin{multline*}
  \kappa_K^{-1} \norm{\Pi_K f - \kappa_K^2 u_h + \ddiv\btau_K^{(2)}}_K
\leq \kappa_K^{-1} \norm{\Pi_K r_h}_K + \kappa_K^{-1} \norm{\ddiv \btauKO}_K
\\
\leq \kappa_K^{-1} \norm{\Pi_K r_h}_K + C \kappa_K^{-1/2} \norm{R}_{\partial K}
\\
\leq C \left(
  \trinorm{u - u_h}_{\widetilde K} + \kappa_K^{-1} \norm{f - \Pi f}_{\widetilde K}
+ \kappa_K^{-1/2} \norm{\gN - \Pi^K_\gamma \gN}_{\GammaN\cap\partial K}
\right).
\end{multline*}
Combining these estimates gives the result claimed.
\end{proof}

\subsection{Main result}

% TO DO: MODIFY ASSUMPTIONS OF THE ABOVE LEMMAS:
% USE $\kappa_K \rhomax \leq 1$ INSTEAD OF $\kappa_K\rho_K \leq 1/\cC_0$.

%We are now in a possition to state our main result.
We combine the flux reconstructions $\btau_K^{(1)}$ and $\btau_K^{(2)}$ in a natural
way and construct $\btau \in \Hdiv$ elementwise as
\begin{equation}
\label{eq:tau}
  \btau|_K = \Bigl\{
  \begin{array}{ll}
   \btau_K^{(1)} & \text{if } \kappa_K \rho_K \leq 1, \\
   \btau_K^{(2)} & \text{if } \kappa_K \rho_K > 1,  %\text{otherwise},
  \end{array}
\end{equation}
where $\btau_K^{(1)}$ and $\btau_K^{(2)}$ are defined in
\ttg{eq:tauK1} and \ttg{eq:tauK2}.
The following theorem shows that the associated 
error estimator provides a guaranteed upper bound
on the error, which is robust with respect to $\kappa$ and $h$.

\begin{theorem}
\label{th:main}
Let $u$ be the exact weak solution given by \ttg{eq:weakf} and
and $u_h \in V_h$ be its finite element approximation \ttg{eq:FEM}.
Let the flux reconstruction $\btau \in \Hdiv$ be given by
\ttg{eq:tau}.
Then the error in $u_h$ is bounded by
$$
% \trinorm{u - u_h}^2 \leq \sum\limits_{K\in\cT_h}
%   \left[ \eta_K(\btau) + \min \left\{ \frac{h_K}{\pi}, \frac{1}{\kappa} \right\}
%    \norm{f - \Pi_K f}_K  \right]^2
\trinorm{u - u_h}^2 \leq \eta^2(\btau)
 = \sum\limits_{K\in\cT_h} \left[ \eta_K(\btau)
  + \osc_K(f) + \osc_{\GammaN\cap\partial K}(\gN) \right]^2.
$$
Moreover, there exists a positive constant $C$, independent of any
mesh-size or any values $\kappa_K$ satisfying \ttg{eq:kappacond1}--\ttg{eq:kappacond2},
such that
\begin{multline*}
  \eta_K(\btau) \leq C \Bigl(
     \trinorm{u-u_h}_{\widetilde K}
   + \min\{h_K,\kappa_K^{-1}\} \norm{f-\Pi f}_{\widetilde K}
\\
   + \min\{h_K,\kappa_K^{-1}\}^{1/2} \norm{\gN-\Pi^K_\gamma \gN}_{\GammaN\cap\partial K}
\Bigr).
\end{multline*}
\end{theorem}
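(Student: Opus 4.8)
The plan is to assemble two ingredients already in place---the abstract upper bound of Lemma~\ref{le:1} and the local efficiency estimates of Lemmas~\ref{le:tauK1} and~\ref{le:tauK2}. For the guaranteed upper bound I would verify that the piecewise-defined flux $\btau$ from \ttg{eq:tau} satisfies the three hypotheses of Lemma~\ref{le:1}. Membership $\btau\in\Hdiv$ holds because each local reconstruction satisfies $\btau_K\cdot\bn_K=g_K$ on every facet of $K$ (the first assertions of Lemmas~\ref{le:divtau1} and~\ref{le:tauK2}), so the consistency condition \ttg{eq:gKconsist} $g_K+g_{K'}=0$ forces continuity of the normal trace across every interior facet. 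The Neumann requirement $\btau\cdot\bn=\Pi_\gamma\gN$ on $\GammaN$ is precisely \ttg{eq:gKNeu}. Finally, whenever $\kappa_K=0$ we have $\kappa_K\rho_K=0\leq 1$, hence $\btau|_K=\btau_K^{(1)}$, and the divergence identity of Lemma~\ref{le:divtau1} collapses to $\Pi_K f+\ddiv\btau=0$ in $K$. With the hypotheses met, Lemma~\ref{le:1} delivers $\trinorm{u-u_h}^2\leq\eta^2(\btau)$ at once, the oscillation terms matching after the notational identification $\osc_{\GammaN\cap\partial K}(\gN)=\sum_{\gamma\subset\GammaN\cap\partial K}\osc_\gamma(\gN)$.

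For the local efficiency estimate I would split into the two regimes of \ttg{eq:tau}. When $\kappa_K\rho_K\leq 1$, Lemma~\ref{le:tauK1} applies; here the shape regularity \ttg{eq:shapereg} combined with $\kappa_K\rho_K\leq 1$ yields $C h_K\leq\min\{h_K,\kappa_K^{-1}\}\leq h_K$, so the factors $h_K$ and $h_K^{1/2}$ appearing there may be traded for $\min\{h_K,\kappa_K^{-1}\}$ and its square root at the cost of the constant. When $\kappa_K\rho_K>1$, Lemma~\ref{le:tauK2} applies, and now $\kappa_K^{-1}<\rho_K\leq h_K$ forces $\min\{h_K,\kappa_K^{-1}\}=\kappa_K^{-1}$ exactly, so the factors $\kappa_K^{-1}$ and $\kappa_K^{-1/2}$ are already $\min\{h_K,\kappa_K^{-1}\}$ and its root. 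In either regime the estimate reduces to the single unified form claimed; taking the larger of the two constants, which by \ttg{eq:kappacond1}--\ttg{eq:kappacond2} is independent of $\kappa$ and of the mesh-size, finishes the argument.

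Since the analytical effort resides entirely in the preceding lemmas, no genuinely hard step remains. The only point demanding care is the bookkeeping reconciling the two parameter scales: the construction is arranged precisely so that the threshold $\kappa_K\rho_K=1$ that switches between $\btau_K^{(1)}$ and $\btau_K^{(2)}$ coincides with the value at which $\min\{h_K,\kappa_K^{-1}\}$ passes from being comparable to $h_K$ to equalling $\kappa_K^{-1}$. Confirming that these two thresholds agree, so that the $\min$ faithfully captures both regimes with a single $\kappa$-independent constant, is the crux of the robustness claim.
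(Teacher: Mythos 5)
Your proposal is correct and follows exactly the paper's own route: the paper proves Theorem~\ref{th:main} by the one-line observation that it ``follows immediately from Lemmas~\ref{le:1}, \ref{le:tauK1}, and \ref{le:tauK2},'' and your argument is precisely that combination, with the hypotheses of Lemma~\ref{le:1} (normal-trace continuity via \ttg{eq:gKconsist}, the Neumann condition \ttg{eq:gKNeu}, and the $\kappa_K=0$ equilibration from Lemma~\ref{le:divtau1}) and the two-regime reconciliation of $h_K$ versus $\min\{h_K,\kappa_K^{-1}\}$ spelled out in the detail the paper leaves implicit.
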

\begin{proof}
It follows immediately from Lemmas~\ref{le:1}, \ref{le:tauK1}, and \ref{le:tauK2}.
\end{proof}

In view of convention \ttg{eq:kappa0}, this result holds even
if $\kappa_K = 0$ for any number of elements $K\in\cT_h$.
Theorem~\ref{th:main} provides a robust, computable upper bound, but it is
possible to improve the bound at the expense of having to compute both
$\eta_K(\btau_K^{(1)})$ and $\eta_K(\btau_K^{(2)})$ on every element.
The associated flux is defined by
\begin{equation}
\label{eq:taustar}
  \btau^*|_K = \Bigl\{
  \begin{array}{ll}
   \btau_K^{(1)} & \text{if } \kappa_K = 0 \text{ or if }
                \eta_K(\btau_K^{(1)}) \leq \eta_K(\btau_K^{(2)}), \\
   \btau_K^{(2)} & \text{otherwise}. % \text{if } \kappa_K \neq 0 \text{ and }
                % \eta_K(\btau_K^{(1)}) > \eta_K(\btau_K^{(2)}), 
  \end{array}
\end{equation}
and the corresponding estimator is given by $\eta(\btau^*)$,
which in turn involves the local indicator
$\eta_K(\btau^*) = \min \bigl\{ \eta_K(\btau_K^{(1)}), \eta_K(\btau_K^{(2)}) \bigr\}$.
This flux reconstruction is slightly more expensive to compute,
but it yields more accurate estimator than $\btau$,
because $\eta_K(\btau^*) \leq \eta_K(\btau)$.
Clearly, if we replace $\btau$ by $\btau^*$ in Theorem~\ref{th:main},
both its statements remain valid.

\section{Numerical example}
\label{se:numex}

This section illustrates numerical performance of the a posteriori error estimators
$\eta(\btau)$ and $\eta(\btau^*)$ for a three dimensional example.
In particular, the example confirms the robustness
of both estimators with respect to the discontinuous reaction coefficient
$\kappa$ and with respect to the mesh size.

%\subsection{Example A}
We consider problem \ttg{eq:modpro} in a cube $\Omega = (-1,1)^3$,
with piecewise constant coefficient $\kappa$
defined by
$$
  \kappa(x_1,x_2,x_3) = \left\{ \begin{array}{ll}
    \kappa_1 & \text{ for } x_1 < 0, \\
    \kappa_2 & \text{ for } x_1 \geq 0, \\
  \end{array} \right.
$$
where $0 < \kappa_1 \leq \kappa_2$ are constants.
The right-hand side is $f = \kappa_1^2$.
Homogeneous Dirichlet boundary conditions are assumed on
$\GammaD = \{ (x_1,x_2,x_3) \in \partial\Omega : x_1 = \pm 1 \}$
and homogeneous Neumann boundary conditions are prescribed on
$\GammaN = \partial\Omega\setminus\GammaD$.

Its exact solution can be expressed as
$$
  u(x_1,x_2,x_3) = \left\{ \begin{array}{ll}
    A_1 \mathrm{e}^{-\kappa_1 x_1} + A_2 \mathrm{e}^{\kappa_1 x_1} + 1
      & \text{ for } x_1 < 0, \\
    A_3 \mathrm{e}^{-\kappa_2 x_1} + A_4 \mathrm{e}^{\kappa_2 x_1}
        + \kappa_1^2/\kappa_2^2
      & \text{ for } x_1 \geq 0, \\
  \end{array} \right.
$$
where constants $A_1, \dots, A_4$ are uniquely determined by the
Dirichlet boundary conditions and by the requirement of $C^1$ continuity
of $u(x_1,x_2,x_3)$ for $x_1 = 0$.
In the subsequent computations we fix $\kappa_2 = 10^6$ and hence the solution
has a boundary layer at least in the vicinity of the face $x_1=1$.
Although the true solution has a univariate nature,
this plays no role in the computations.

We approximate this problem using linear finite elements on uniform tetrahedral meshes
that are constructed in two steps. First, the cube $\Omega$ is uniformly divided
into $M^3$ subcubes and then each subcube is split into 6 tetrahedrons
along its diagonal.
The resulting mesh then has $N_\mathrm{DOF} = (M-1)(M+1)^2$ degrees of freedom.

Figure~\ref{fi:exAerr-kappa} presents the results for a fixed mesh ($M=16$, $N_\mathrm{DOF}=4335$).
%NDOF = 1089
The left panel shows the dependence of the true error $\trinorm{u-u_h}$
and the error estimators $\eta(\btau)$ and $\eta(\btau^*)$
as $\kappa_1$ is varied in the range $(0,\kappa_2]$.
The right panel presents the effectivity indices 
$I_\mathrm{eff} = \eta / \trinorm{u-u_h}$.
We observe that both estimators provide upper bound on the
error and that they robustly capture the behaviour of the error in the whole
range of values of $\kappa_1$. Thus, they are independent of the ratio
$\kappa_1/\kappa_2$ in this case. As expected,
the effectivity index for $\eta(\btau^*)$ is smaller than for $\eta(\btau)$.
Both indices exhibit values around 2 for small values of $\kappa_1$
and they are close to 1 for $\kappa_1 \geq 1000$.

\begin{figure}
%\begin{center}
%\newlength{\ww}
%\setlength{\ww}{1.00mm}
\centerline{
\includegraphics[width=77mm]{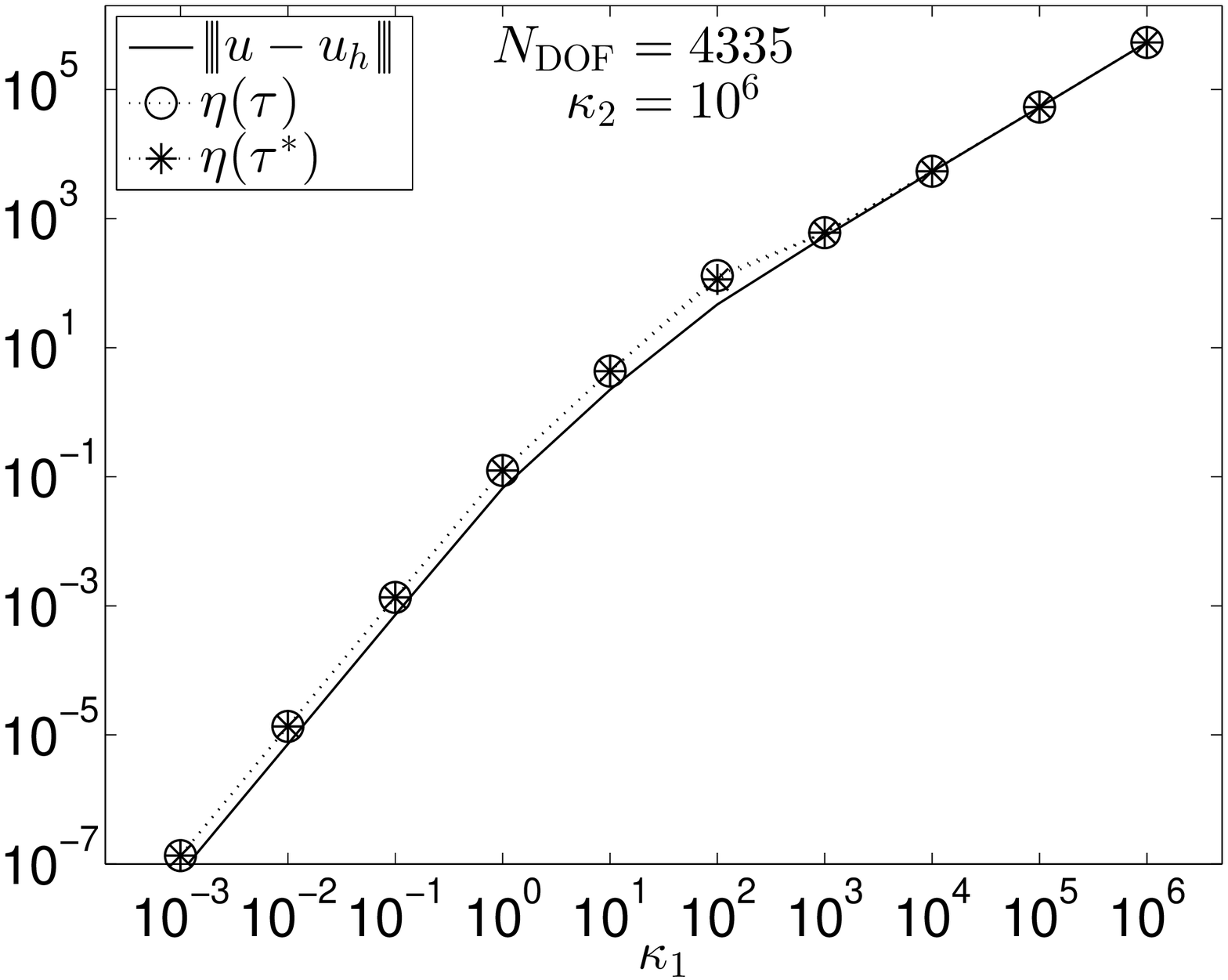}%
\ 
\includegraphics[width=77mm]{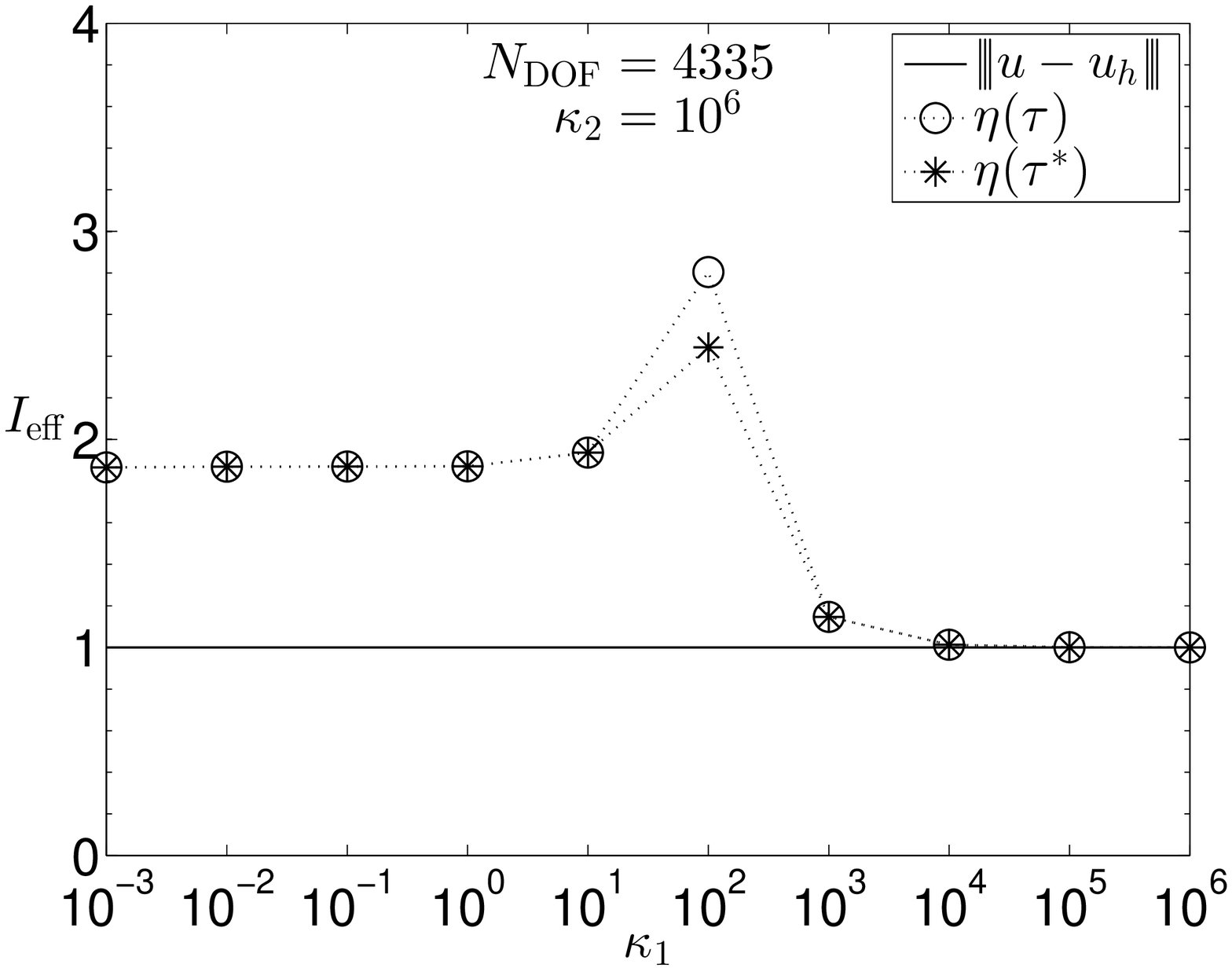}%
}
%\end{center}
\caption{\label{fi:exAerr-kappa}
Dependence of $\trinorm{u-u_h}$, $\eta(\btau)$, and $\eta(\btau^*)$ on $\kappa_1$ (left)
and corresponding effectivity indices (right).
These results correspond to $\kappa_2=10^6$ and to a mesh with
$N_{\mathrm{DOF}} = 4335$ ($M=16$).
}
\end{figure}

Similarly, Figure~\ref{fi:exAerr-ndof} demonstrates the behaviour of
these error estimators and of the true error with respect to the number of degrees
of freedom.
In this case we fix $\kappa_1=100$ and solve the problem on a series
of meshes with $M=2,2^2,2^3, \dots, 2^7$.
We have chosen the most unfavourable value $\kappa_1=100$ for which
both error estimators exhibit the highest overestimation in Figure~\ref{fi:exAerr-kappa}.

As above, the left panel of Figure~\ref{fi:exAerr-ndof}
presents the values of the true error $\trinorm{u-u_h}$
and of the estimators $\eta(\btau)$ and $\eta(\btau^*)$,
while the right panel shows the effectivity indices.
Again, we verify the upper bound property of the estimators and
observe their robust behaviour with respect to the mesh size.
The effectivity indices have values around 1 and 2 with
an exception of the intermediate case, where the mesh size $h$ is comparable
to $1/\kappa_1$.

\begin{figure}
%\begin{center}
%\newlength{\ww}
%\setlength{\ww}{1.00mm}
\centerline{
\includegraphics[width=77mm]{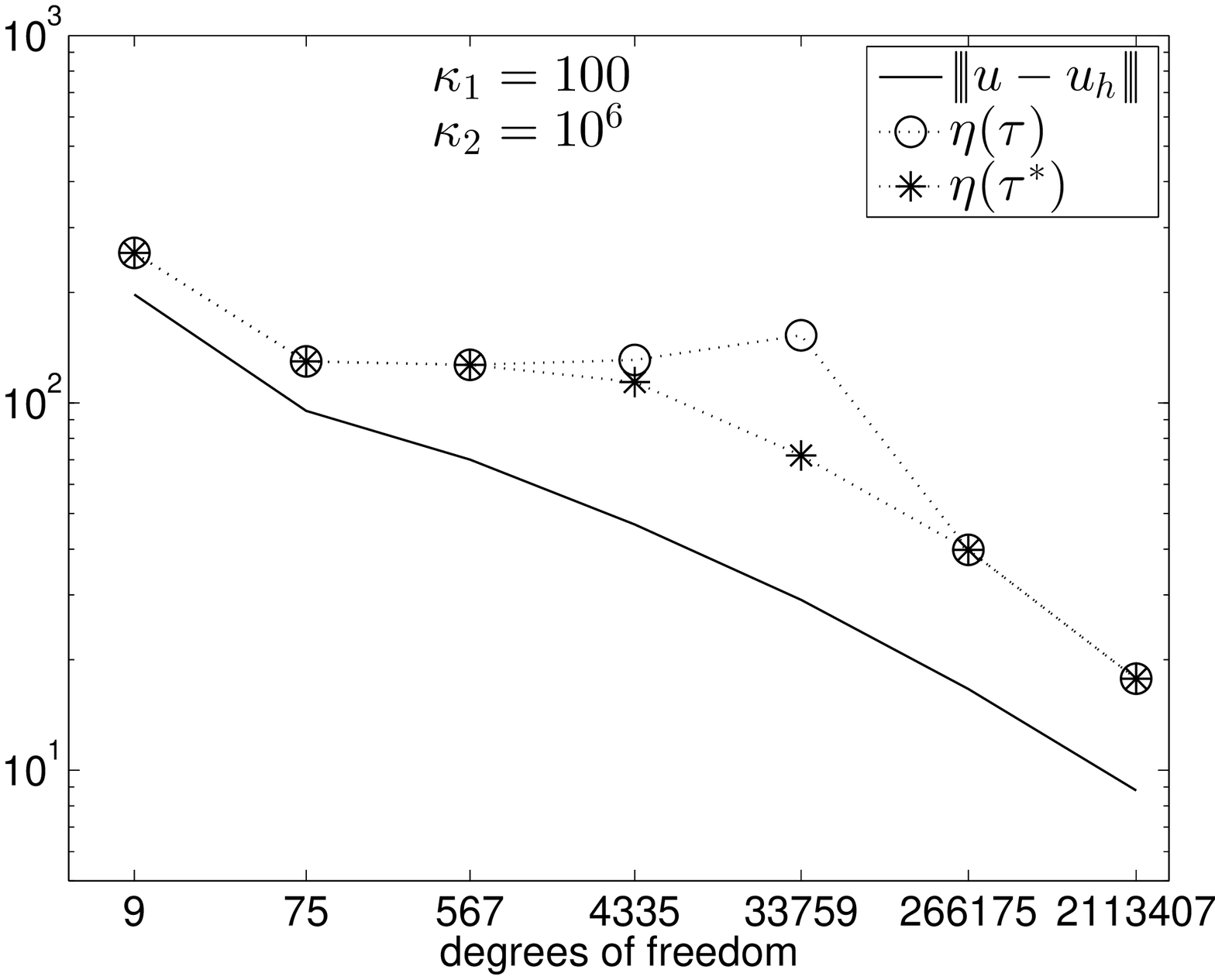}%
%\qquad
\includegraphics[width=77mm]{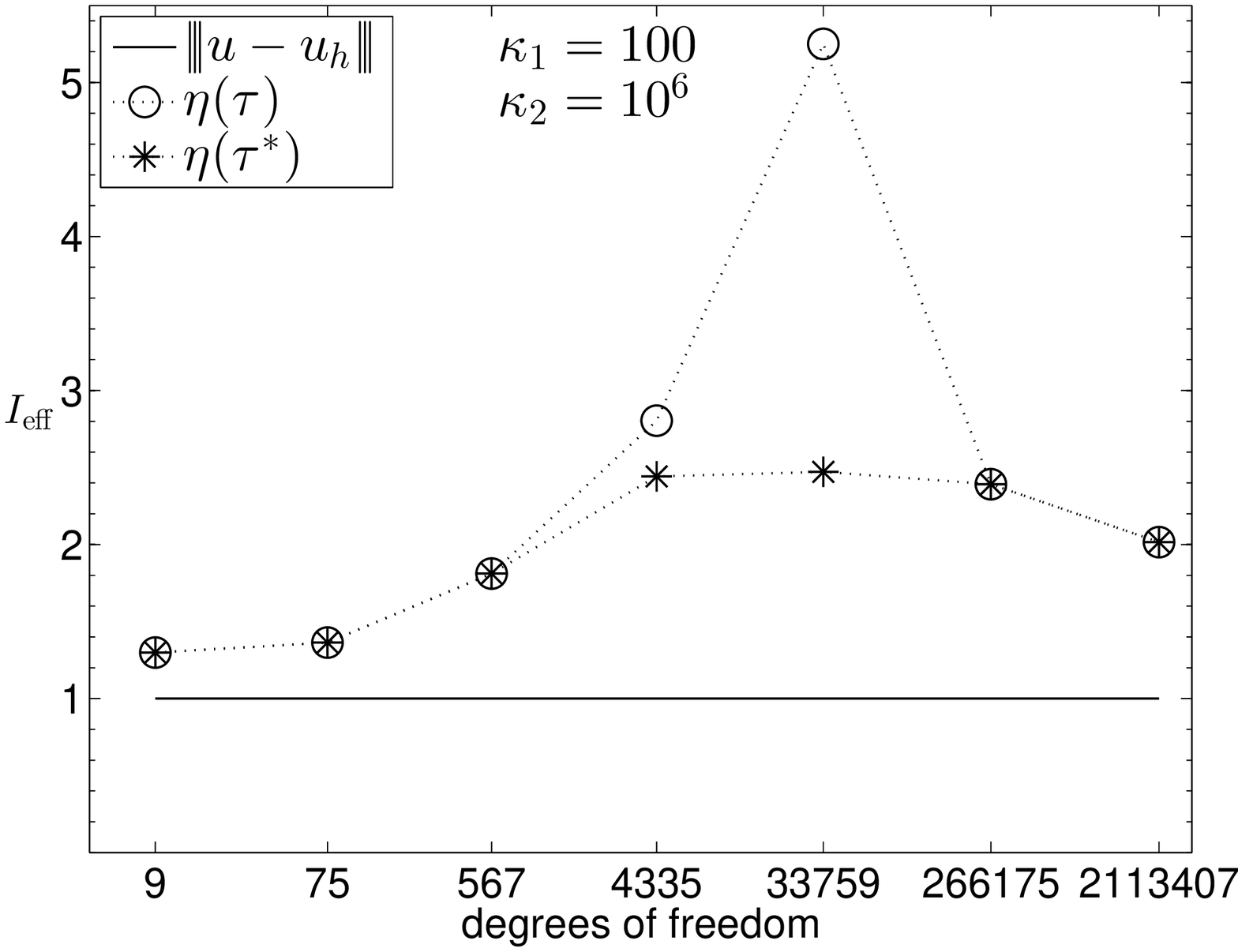}%
}
%\end{center}
\caption{\label{fi:exAerr-ndof}
Dependence of $\trinorm{u-u_h}$, $\eta(\btau)$, and $\eta(\btau^*)$ on
the number of degrees of freedom (left)
and corresponding effectivity indices (right).
These results were computed on a sequence of uniformly refined meshes with $\kappa_1=100$ and $\kappa_2=10^6$.
}
\end{figure}

\section{Conclusions}
\label{se:conclusions}

We presented a robust a posteriori error estimator on the energy norm
of the approximation error for a reaction-diffusion problem in arbitrary dimension.
The reaction coefficient $\kappa$ is assumed to be piecewise constant
and mixed Dirichlet-Neumann boundary conditions are allowed.
The estimator is robust with respect to the reaction coefficient $\kappa$,
including the singularly perturbed case,
and it provides a computable upper bound on the error.
The upper bound is guaranteed up to round-off errors and quadrature errors
in the evaluation of $\eta(\btau)$.

The evaluation of the error estimator can be implemented as a fast algorithm
in the sense that the computational complexity is proportional to the number
of elements. Indeed, the boundary flux equilibration procedure described in
Section~\ref{se:robequilib} is fast, because it is based on solving small systems
on patches. Flux reconstructions \ttg{eq:tauK1} and \ttg{eq:tauK2}
are given by explicit formulas, hence, the only issue is to loop over all
elements and compute the norms in \ttg{eq:etaK}.

The presented approach is suitable for the piecewise linear finite element
approximations. The Galerkin condition \ttg{eq:FEM} is required in order to
guarantee the exact equilibration condition \ttg{eq:equilib1} in case of
small values of the reaction coefficient $\kappa$.
On the other hand, the exact equilibration is not needed for large values
of $\kappa$ and the presented error estimator can be used for an arbitrary
(conforming) approximate solution $u_h \in V$.

% Upper bounds on the error of arbitrary approximations are of particular
% interest, because
% they estimate the total approximation error. This contrasts with many other
% error estimators which bound the discretization error only.
% The most important components of the total approximation error for linear
% elliptic problems are the discretization error
% and the algebraic error (stemming from inexact solution of linear algebraic systems).
% Separate estimates of the discretization and algebraic error
% can be obtain by the recent technique of
% \cite{ErnVoh:2012a,ErnVoh:2012b,JirStrVoh:2010}.
% These separate estimates enable to stop the iterative algebraic solvers
% at the moment when the algebraic error is comparable (or slightly smaller)
% then the discretization error.
% This balancing of algebraic and discretization error may yield considerable
% computational savings.

Finally, we note that whilst we have assumed
conformity of the approximation, this is not essential.
Methodologies derived in \cite{Ainsworth:2005,Ainsworth:2007,ErnSteVoh:2010}
could be used
to extend the error bound to any piecewise linear non-conforming approximation.

% \begin{equation}
% \label{}
% \end{equation}

% \begin{lemma}
% \end{lemma}
% \begin{proof}
% \end{proof}

% \bibliographystyle{panm}
% %\bibliographystyleCa{myunsrt} %{abbrv}
% \bibliography{%
% /home/vejchod/tex/bib/vejchod,%
% /home/vejchod/tex/bib/vejchod_aee,%
% /home/vejchod/tex/bib/citations%
% }

%% The Appendices part is started with the command \appendix;
%% appendix sections are then done as normal sections
%% \appendix

%% \section{}
%% \label{}

\bigskip
\noindent{\bf References}

%% If you have bibdatabase file and want bibtex to generate the
%% bibitems, please use
%%
% \bibliographystyle{elsarticle-num}
% \bibliography{%
% /home/vejchodsky/tex/bib/vejchod,%
% /home/vejchodsky/tex/bib/vejchod_aee,%
% /home/vejchodsky/tex/bib/citations%
% }

%% else use the following coding to input the bibitems directly in the
%% TeX file.

\end{document}